\theoremstyle{plain}
\newtheorem{theorem}{Theorem}[section]
\newtheorem{proposition}[theorem]{Proposition}
\newtheorem{lemma}[theorem]{Lemma}
\newtheorem{corollary}[theorem]{Corollary}
\newtheorem{definition}[theorem]{Definition}
\newtheorem{remark}[theorem]{Remark}
\newtheorem{example}{Example}
\newtheorem{claim}{Claim}
\newtheorem{Theorem}{Theorem}
\numberwithin{equation}{section}
\let\oldmarginpar\marginpar
\renewcommand\marginpar[1]{\-\oldmarginpar[\raggedleft\footnotesize #1]%
{\raggedright\footnotesize #1}}
\newcommand \bei {\begin{itemize}}
\newcommand \eei {\end{itemize}}
\newcommand \be         {\begin{equation}}
\newcommand \bel {\be\label}
\newcommand \eps \epsilon
\newcommand \coeff \kappa
\newcommand \del \partial
\newcommand \ee         {\end{equation}}
\newcommand \la \langle
\newcommand \ra \rangle
\definecolor{myred}{rgb}{0.858, 0.0, 0.0}
\begin{document}

\title{Progress on nonuniqueness of solutions to the vacuum Einstein conformal constraint equations with positive Yamabe invariant
}

\author{The Cang Nguyen\footnote{
Department of Mathematics, University of Quy Nhon, Vietnam.
E-mail: {\sl alpthecang@gmail.com.}
}}
\date{\today}
\maketitle
\begin{abstract}
	In this article, we make a generalization of classical fixed point theorems by using the concept of half-continuity and then apply it to improve the nonuniqueness result for solutions to the vacuum Einstein conformal equations shown in \cite{NguyenNonexistence}.
\end{abstract} 
% 2000\textit{\ AMS Subject Classification.}  Primary:   . Secondary:
% \textit{Key Words and Phrases.}
\section{Introduction}
On a given smooth, compact $n-$manifold $M$ with $n\ge 3$, the \textit{vacuum Einstein constraint equations} for a metric $\widetilde{g}$ and a symmetric $(0,2)-$tensor $\widetilde{K}$ are
\begin{equation}\label{contraint equations}
\aligned
R_{\widetilde{g}}-|\widetilde{K}|_{\widetilde{g}}^2+(\text{Tr}_{\widetilde{g}}\widetilde{K})^2&=0
\\
\text{div}_{\widetilde{g}}\widetilde{K}-d(\text{Tr}_{\widetilde{g}}\widetilde{K})&=0,
\endaligned
\end{equation}
where $R_{\widetilde{g}}$ is the scalar curvature of $\widetilde{g}$. The study of solutions to \eqref{contraint equations} is a topical issue because they can
be used to produce solutions of the Einstein equations on a Lorentzian $(n + 1)-$dimensional manifold, as guaranteed by a well known result of Choquet-Bruhat \cite{CB1}.     

One of most efficient approaches to solving \eqref{contraint equations} is the conformal method introduced by Lichnerowicz \cite{Li44} and later Choquet-Bruhat and York \cite{CBY80}. The idea of this method is to divide a solution $(\widetilde{g},\,\widetilde{K})$ into some reasonable parts, and then solve for the rest of the data. More precisely, given seed data:
\begin{itemize}
	\item $g$ - a Riemannian metric on $M$,
	\item $\sigma$ - a divergence-free ($\nabla^i\sigma_{ij}=0$), trace-free ($g^{ij}\sigma_{ij}=0$) symmetric tensor,
	\item $\tau$ - a scalar field,
\end{itemize} 
we seek a solution $(\widetilde{g},\,\widetilde{K})$ of the form
$$
\aligned
\widetilde{g} =&\varphi^{N-2}g
\\
\widetilde{K}=&\frac{\tau}{n}\varphi^{N-2}g+\varphi^{-2}(\sigma+LW).
\endaligned
$$
Here a positive function $\varphi$ and a $1-$form $W$ are unknowns, $N=\frac{2n}{n-2}$ and $L$ is the conformal Killing operator defined by
$$
(LW)_{ij}=\nabla_{i} W_{j}+\nabla_{j}W_{i}-\frac{2}{n}(\text{div}W)g_{ij}.
$$
Equations \eqref{contraint equations} then become a nonlinear elliptic system for $\varphi$ and $W$:
 \begin{subequations}\label{CE}
 \begin{eqnarray}
 \label{eqLichnerowicz}
 \frac{4(n-1)}{n-2}\Delta_g \varphi+R_g\varphi&=&-\frac{n-1}{n}\tau^2\varphi^{N-1}+|\sigma+LW|^{2}\varphi^{-N-1}{\footnotesize [\text{Lichnerowicz equation}]}
 \\
 \label{eqVector}
 -\frac{1}{2}L^{*}LW&=&\frac{n-1}{n}\varphi^{N}d\tau, \qquad\qquad\qquad\qquad\quad\qquad{\footnotesize [\text{vector equation}]},
 \end{eqnarray}
 \end{subequations}
 where $\Delta_g$ is the negative Laplace operator and $L^{*}$ is the formal $L^{2}-$adjoint of $L$. These coupled equations are called the \textit{vacuum Einstein conformal constraint equations}, or simply \textit{the conformal equations}.

When the mean curvature $\tau$ is constant or almost constant, a completed description of this system is given by many authors. A solution $(\varphi,\,W)$ to \eqref{CE}, if it exists, is unique in this situation. The interested reader is referred to \cite{ACI08,BartnikIsenberg,Isenberg,MaxwellRoughCompact} for further information. When $\tau$ is freely specified, the situation appears much harder and only two methods exist in \cite{HNT2, MaxwellNonCMC} and \cite{DahlGicquaudHumbert} to tackle this case. However neither of them tells us whether solutions are unique or not.
 
Using the technique in \cite{DahlGicquaudHumbert}, recent work of the author in \cite{NguyenNonexistence} shows that if the Yamabe invariant is positive and if
\begin{equation}\label{key condition 1}
\bigg|L\bigg(\frac{d\tau}{\tau}\bigg)\bigg|\le c\bigg|\frac{d\tau}{\tau}\bigg|^2
\end{equation}
for some constant $c>0$,  the system \eqref{CE} associated with data $(g,t_i\tau^a,k\sigma)$ has at least two solutions provided that
\begin{itemize} 
	\item[(i)] $\sigma\ne 0$ and $\text{supp}\{\sigma\}\subsetneq M\setminus U$, for some neighborhood $U$ of the critical set of $\tau$,
	\item[(ii)] $a,\,k$ are sufficiently large constants only depending on $(g,\tau,\sigma,c)$,
	\item[(iii)] $\{t_i\}$ is a certain real sequence converging to $0$. 
\end{itemize}

 In this article, we are interested in this nonuniqueness result. The question we shall be concerned with is whether we can optimize the assumptions (i--iii), for instance with an arbitrary $\sigma\ne 0$ and for all sufficiently small $t$. Note that the Schauder fixed point theorem used in most of the relevant articles seems not to be helpful in this situation, so we will employ here the so-called half-continuity method previously introduced in \cite{Nguyen} to address the question. Explicitly, the main result we would like to obtain is as follows.
 \begin{Theorem}\label{theorem.main}
 Let $g\in W^{2,p}$ with $p > n$ be a Yamabe-positive metric on a smooth compact $n-$manifold. Assume that $g$ has no conformal Killing vector field, $\sigma \in W^{1,p}\setminus \{0\}$ and $\tau\in W^{1,p}$ does not change sign. Assume furthermore that $\tau$ satisfies 
 $$
 \bigg|L\bigg(\frac{d\tau}{\tau}\bigg)\bigg|\le c\bigg|\frac{d\tau}{\tau}\bigg|^2
 $$
 for some constant $c>0$.  Then the two following assertions are true:
 \begin{itemize}
 	\item[1.] Given $a>\frac{c}{2} \sqrt{\dfrac{n}{n-1}}$ the system \eqref{CE} associated with $(g,t\tau^a,\sigma)$ has at least two solutions for all $t>0$ small enough only depending on $(g,\tau,\sigma,a)$,
 	\item[2.] If $|\tau|<1$, the system \eqref{CE} associated with $(g,\tau^a,\sigma)$ has at least two solutions for all $a$ large enough only depending on $(g,\tau,\sigma,c)$.
 \end{itemize}
 \end{Theorem}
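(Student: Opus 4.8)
The plan is to recast \eqref{CE} as a fixed-point equation for a single map $\mathcal T$ on the positive cone, to record a rigidity property of the associated Dahl--Gicquaud--Humbert limit equation, and then to locate two fixed points of $\mathcal T$ with the help of the half-continuity fixed-point theory developed above. Since $g$ carries no conformal Killing field, $L^{*}L$ is an isomorphism onto its range, so every positive $\varphi\in W^{2,p}$ determines a unique $W_{\varphi}\in W^{2,p}$ solving the vector equation, with $\|W_{\varphi}\|_{W^{2,p}}\le Ct\,\|\varphi\|_{\infty}^{N}$ by elliptic regularity ($p>n$); as the Yamabe invariant is positive and $\mathcal A_{\varphi}:=\sigma+LW_{\varphi}\not\equiv0$, the Lichnerowicz equation with $\mathcal A_{\varphi}$ in place of $\sigma+LW$ has a unique positive solution $\mathcal T(\varphi)\in W^{2,p}$, and the solutions of \eqref{CE} are precisely the fixed points of $\mathcal T$. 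This $\mathcal T$ is half-continuous but, since $\|W_{\varphi}\|$ grows like $\|\varphi\|^{N}$, it fails to be compact on unbounded portions of the cone --- which is exactly the point at which the Schauder theorem used in \cite{NguyenNonexistence} must be replaced and the restrictions (i)--(iii) relaxed.

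A \emph{first} solution is produced classically. A global subsolution $\varphi_{-}$ exists --- the delicate standard step in the Yamabe--positive case, which uses $\sigma\not\equiv0$ --- and after normalising so that $R_{g}\ge R_{0}>0$ a sufficiently large constant $M=M(g,\sigma)$ is a supersolution of the Lichnerowicz equation for every $\varphi\in[\varphi_{-},M]$ once $t\le t_{0}(g,\tau,\sigma,M)$, because $\|\mathcal A_{\varphi}\|_{\infty}\le\|\sigma\|_{\infty}+CtM^{N}$. Hence $\mathcal T$ maps $[\varphi_{-},M]$ into itself, and the half-continuity fixed-point theorem gives a fixed point $\varphi_{1}\in[\varphi_{-},M]$, with $\|\varphi_{1}\|_{\infty}\le M$ uniformly in $t$ and $\varphi_{1}\to\varphi_{0}$ as $t\to0$, where $\tfrac{4(n-1)}{n-2}\Delta_{g}\varphi_{0}+R_{g}\varphi_{0}=|\sigma|^{2}\varphi_{0}^{-N-1}$.

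The heart of the matter is the \emph{second} solution, and here the essential input is the rigidity of the limit equation
\begin{equation}\label{eqlimit}
-\tfrac12 L^{*}LW=a\sqrt{\tfrac{n-1}{n}}\,|LW|\,\tfrac{d\tau}{\tau}.
\end{equation}
If $W$ solves \eqref{eqlimit} and $S:=LW$ (trace-free, so $-\tfrac12 L^{*}S=\text{div}\,S$), then contracting with $\tfrac{d\tau}{\tau}$, integrating over $M$, and dropping the antisymmetric and pure-trace parts against the symmetric trace-free $S$ leaves $-\tfrac12\int_{M}\langle S,L(\tfrac{d\tau}{\tau})\rangle=a\sqrt{\tfrac{n-1}{n}}\int_{M}|S|\,|\tfrac{d\tau}{\tau}|^{2}$; by the hypothesis $|L(\tfrac{d\tau}{\tau})|\le c|\tfrac{d\tau}{\tau}|^{2}$ and Cauchy--Schwarz this forces $\big(a\sqrt{\tfrac{n-1}{n}}-\tfrac c2\big)\int_{M}|S|\,|\tfrac{d\tau}{\tau}|^{2}\le0$, so $a>\tfrac c2\sqrt{\tfrac{n}{n-1}}$ gives $|S|\,|\tfrac{d\tau}{\tau}|^{2}\equiv0$, hence $\text{div}\,S\equiv0$, hence $\int_{M}|LW|^{2}=0$, hence $LW\equiv0$, and the absence of conformal Killing fields yields $W\equiv0$. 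Feeding this into the Dahl--Gicquaud--Humbert rescaling $\varphi=\|\varphi\|_{\infty}v$ (using $N-1>1$ to discard the linearised term), a hypothetical blow-up sequence of solutions of \eqref{CE} for a fixed $t$ would produce a nonzero solution of \eqref{eqlimit}; hence \eqref{CE} enjoys, for each fixed $t$, an a priori bound $\|\varphi\|_{\infty}\le C_{\ast}(t)$, and --- since \eqref{eqlimit} does not see $\sigma$ --- bounds uniform along the deformation $\sigma\rightsquigarrow0$. The natural way to extract a second solution is then a fixed-point-index computation in the half-continuity framework: the index of $\mathrm{Id}-\mathcal T$ at $\varphi_{1}$ equals $+1$ because for $t$ small $\mathcal T'(\varphi_{1})$ has operator norm $O(t)$ (its coupling part is controlled by $t\|\varphi_{1}\|^{N-1}$), so $\mathrm{Id}-\mathcal T$ is an orientation-preserving isomorphism near $\varphi_{1}$; whereas the index over $\{\|\varphi\|_{\infty}<C_{\ast}(t)+1\}$ equals $0$, by the homotopy $\sigma\rightsquigarrow0$ (along which the a priori bound persists) together with the nonexistence of solutions to the vacuum far-from-CMC system attached to $(g,t\tau^{a},0)$, the nonexistence phenomenon studied in \cite{NguyenNonexistence}. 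Since $0-1\ne0$, there is a fixed point $\varphi_{2}\ne\varphi_{1}$ in $\{\|\varphi\|_{\infty}<C_{\ast}(t)+1\}$, for every $t$ small enough depending only on $(g,\tau,\sigma,a)$; the role of half-continuity is to make the index available for the non-compact map $\mathcal T$ and to dispose of the passage to a sequence $\{t_{i}\}$.

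Finally, assertion 2 follows by running the argument of assertion 1 for the data $(g,\tau^{a},\sigma)$: the sign of the mean curvature is unchanged, $\|\tau^{a}\|_{\infty}=\|\tau\|_{\infty}^{a}\to0$ as $a\to\infty$ since $|\tau|<1$ (so the smallness previously demanded of $t$ now holds automatically for $a$ large), and $|L(\tfrac{d(\tau^{a})}{\tau^{a}})|=a\,|L(\tfrac{d\tau}{\tau})|\le\tfrac ca\,|\tfrac{d(\tau^{a})}{\tau^{a}}|^{2}$, so the threshold of assertion 1 now reads $1>\tfrac{c}{2a}\sqrt{\tfrac{n}{n-1}}$, that is, exactly $a>\tfrac c2\sqrt{\tfrac{n}{n-1}}$. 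I expect the main obstacle to be the second solution: showing that $\mathcal T$ is genuinely half-continuous, that a well-behaved fixed-point index is available in this non-compact, low-regularity ($W^{2,p}$, $p>n$) setting, and that the deformation $\sigma\rightsquigarrow0$ terminates in a regime of nonexistence --- in short, that the half-continuity theory is strong enough to bypass the compactness that constrained \cite{NguyenNonexistence}.
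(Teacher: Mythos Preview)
Your proposal diverges sharply from the paper's proof, and the divergence contains a real gap.

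\textbf{The gap.} Your second-solution argument hinges on the claim that the total fixed-point index of $\mathrm{Id}-\mathcal T$ over the large ball is $0$, obtained by deforming $\sigma\rightsquigarrow 0$ and invoking ``the nonexistence phenomenon studied in \cite{NguyenNonexistence}'' for the data $(g,t\tau^{a},0)$. But that paper's nonexistence results live in the \emph{far-from-CMC} regime, whereas here $t$ is small and $\|d(t\tau^{a})\|_{L^{p}}\to 0$; this is precisely the near-CMC/small-$\tau$ regime in which existence (not nonexistence) is known. There is no reason to expect the $\sigma=0$ system to be fixed-point free for small $t$, so the index-$0$ conclusion is unjustified. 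A second issue: when $\sigma=0$ the map $\mathcal T$ need not be everywhere defined in the Yamabe-positive case (if $LW_{\varphi}\equiv 0$ the Lichnerowicz equation has no positive solution), so the homotopy itself is delicate. Finally, you misread the role of half-continuity: in the paper the solution map $T$ is genuinely continuous and compact; half-continuity is used not to salvage index theory for a non-compact $\mathcal T$, but to handle an auxiliary \emph{discontinuous} map $S$ built from $T$.

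\textbf{What the paper actually does.} The paper never computes a degree. Instead it (i) adds an artificial source term $\big(2\max\{\|\varphi_{0}\|_{\infty},2\}-\|\varphi\|_{\infty}\big)_{+}\big(\|\sigma+LW_{0}\|_{\infty}^{2}+k_{0}^{2}\big)$ to the Lichnerowicz equation, where $(\varphi_{0},W_{0},k_{0})$ is a carefully chosen ``critical element'' attaining the a priori bound $A(a_{0})$; this forces the modified map $T(1,\cdot)$ to have \emph{no} fixed point (by the maximum principle, any fixed point would strictly exceed $\varphi_{0}$, contradicting maximality of $\|\varphi_{0}\|_{\infty}$). (ii) It then defines a two-function $T$-association $\{F_{\kappa,1},F_{\kappa,2}\}$ and applies the abstract half-continuity theorem (Theorem~\ref{theorem generalization}); since alternative (i) of that theorem is excluded, alternative (ii) produces a point $(t,\varphi)$ with $\varphi=tT(t,\varphi)$ lying on the zero set of $F_{\kappa,1}F_{\kappa,2}$. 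A short dichotomy argument using the definition of $b_{\kappa}$ and the a priori bound $A(a)$ rules out $F_{\kappa,2}=0$, forcing $F_{\kappa,1}=0$, which pins down $1/t=\kappa$ and simultaneously kills the artificial term (since $\|\varphi\|_{\infty}>2\max\{\|\varphi_{0}\|_{\infty},2\}$). The resulting $\psi=T(t,\varphi)$ solves the genuine conformal system for $(g,\tau^{\kappa a_{0}},\sigma)$ with $\|\psi\|_{\infty}\ge\kappa>c_{1}$, hence is distinct from the small solution supplied by Theorem~\ref{theorem.farCMC}. Your limit-equation rigidity computation and your reduction of assertion~2 to assertion~1 are both correct and match the paper; what is missing is this entire penalty-term/$T$-association mechanism that replaces your unsupported degree calculation.
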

The outline of this article is as follows. In Section 2 we introduce the concept of half-continuity and then make a generalization of classical fixed point theorems.
In Section 3 we establish some general results on the Lichnerowicz equation, that is the first equation in \eqref{CE}. In the last section we will apply our new fixed point theorem to give the proof of Theorem \ref{theorem.main}.
%%%%%%%%%%%%%%%%%%%%%%%%%%%%%%%%%%%%%%%%%%%%%%%%%%%%%%%%%%%%%%%%%%%%%%%%%%%%%%%%%%%%%%%%%%%%%%%%%%%%%%%%%%%%%%%%%%%%%%%%%%%%%%%%%%%%%%%%%%%%%%%%%%%%%%%%%%%%%%%%%%%%%%%%%%%%%%%%%%%%%%%%%%%%%%%%%%%%%%%%%%%%%%%%%%%%%%%%%%%%%%%%%%%%%%%%%%%%%%%%%%%%%%%%%%%%
\section{The half-continuity method}

\subsection{Motivation}\label{motivation}
We begin by recalling Schaefer's fixed point theorem, which is a direct consequence of Leray--Schauder's fixed point theorem. For the proof, we refer the reader to \cite[Theorem 11.6]{GTelliptic}.
\begin{theorem}\label{Schaefer FPT}
	Let $X$ be a Banach space and assume that $T~:~ [0,1]\times X \longrightarrow X$ is a continuous compact operator. Set
	$$
	K:=\bigg\{(t,x)\in [0,1]\times X \quad\text{s.t.}\quad x=tT(t,x) \bigg\}.
	$$
	If $K$ is bounded, then $T(1,.)$ has a fixed point.
\end{theorem}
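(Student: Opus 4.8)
The plan is to derive the statement from the Leray--Schauder fixed point theorem in its standard \emph{a priori bound} form: if $F$ is a continuous compact self-map of a Banach space $X$ and the set $\{x\in X : x=\sigma F(x)\ \text{for some}\ \sigma\in[0,1]\}$ is bounded, then $F$ has a fixed point. The subtlety is that the parameter $t$ in Theorem~\ref{Schaefer FPT} sits in \emph{both} slots of the relation $x=tT(t,x)$, so one cannot feed the single map $T(1,\cdot)$ into Leray--Schauder directly; instead one must manufacture a homotopy from $T$ and then fold the interval $[0,1]$ into $X$ by a cut-off.

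First I would set $\mathcal{H}:[0,1]\times X\to X$, $\mathcal{H}(t,x):=t\,T(t,x)$. Since $T$ is continuous and compact and $[0,1]$ is compact, $\mathcal{H}$ is again continuous and compact: for bounded $B\subset[0,1]\times X$ the set $T(B)$ is precompact, so the image of the compact set $[0,1]\times\overline{T(B)}$ under the continuous scalar multiplication $(s,y)\mapsto sy$ is compact, and it contains $\mathcal{H}(B)$. Observe that $\mathcal{H}(0,\cdot)\equiv 0$ and that, directly from the definition of $K$, one has $(t,x)\in K$ if and only if $x=\mathcal{H}(t,x)$; hence $\{x\in X : x=\mathcal{H}(t,x)\ \text{for some}\ t\in[0,1]\}$ equals the $X$-projection of $K$ and is therefore bounded. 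Fix some $M>0$ with $\|x\|_X<M$ for every $x$ in this set.

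Next I would pick a continuous cut-off $\chi:X\to[0,1]$ with $\chi\equiv 1$ on $\{\|x\|_X\le M\}$ and $\chi\equiv 0$ on $\{\|x\|_X\ge 2M\}$ --- for instance $\chi(x)=\min\{1,\max\{0,\,2-\|x\|_X/M\}\}$ --- and define
\[
F:X\longrightarrow X,\qquad F(x):=\mathcal{H}\big(\chi(x),x\big)=\chi(x)\,T\big(\chi(x),x\big).
\]
Then $F$ is continuous, and it is compact since $\mathcal{H}$ carries bounded sets into precompact ones. To invoke Leray--Schauder for $F$, I check that its $\sigma$-fixed-point set is bounded: if $x=\sigma F(x)$ with $\sigma\in[0,1]$ and $\|x\|_X\ge 2M$, then $\chi(x)=0$, so $F(x)=\mathcal{H}(0,x)=0$ and hence $x=0$, contradicting $\|x\|_X\ge 2M>0$; thus every such $x$ satisfies $\|x\|_X<2M$. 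Leray--Schauder then yields $x_\star\in X$ with $x_\star=F(x_\star)=\mathcal{H}\big(\chi(x_\star),x_\star\big)$.

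The last step is to show $\chi(x_\star)=1$, which finishes the proof. If $\chi(x_\star)<1$, then $\|x_\star\|_X>M$; but the identity $x_\star=\mathcal{H}\big(\chi(x_\star),x_\star\big)$ with $t:=\chi(x_\star)\in[0,1]$ says exactly that $(t,x_\star)\in K$, whence $\|x_\star\|_X<M$ --- a contradiction. Therefore $\chi(x_\star)=1$, and consequently $x_\star=\mathcal{H}(1,x_\star)=T(1,x_\star)$, i.e. $T(1,\cdot)$ has a fixed point. The two points that demand a little care are the compactness of $\mathcal{H}$ (hence of $F$) under scalar multiplication by the compact interval, and arranging the cut-off $\chi$ so that it simultaneously keeps the $\sigma$-fixed-point set of $F$ bounded \emph{and} forces $\chi=1$ at the fixed point returned by Leray--Schauder; both become routine once $\mathcal{H}$ is set up as above, and I expect no further obstacle.
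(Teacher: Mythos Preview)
Your proof is correct and aligns with the paper's approach: the paper does not give its own argument but simply states that the theorem is a direct consequence of Leray--Schauder and cites \cite[Theorem 11.6]{GTelliptic}. That reference is the \emph{parametric} Leray--Schauder theorem (compact homotopy $T(x,\sigma)$ with $T(\cdot,0)\equiv 0$ and an a~priori bound on all $\sigma$-fixed points), so one can apply it directly to $(x,\sigma)\mapsto \sigma T(\sigma,x)$ without any cutoff; your reduction to the single-map version via $\chi$ is a mild detour but entirely sound.
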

 Let $(X,\,T,\,K)$ be as in Theorem \ref{Schaefer FPT}. For any $c>0$ we consider the map $F_c: X \longrightarrow \mathbb{R}$ defined by 
$$
F_c(t,x):=\|x\|-c.
$$
It follows by Theorem \ref{Schaefer FPT} that $T(1,.)$ has a fixed point provided
\begin{equation}\label{motivation 2}
\bigg\{(t,x)\in K~:~F_c(x)=0\bigg\} = \emptyset
\end{equation}
for some (and hence all) $c$ large enough. 

Now let $\{F_i\}_{1\le i\le l}$ with $l\in \mathbb{N}_+$ be a certain finite sequence of real-valued functions on $[0,1]\times X$. One should think of $\{F_i\}_{1\le i\le l}$ as a ``generalization" of $\{F_c\}$. A natural question to ask is under what conditions on $\{F_i\}_{1\le i\le l}$ the map $T(1,.)$ has a fixed point as long as
$$
\bigg\{(t,x)\in K~\Big|~\prod_{i=1}^lF_i(t,x)=0~~\text{and}~~F_i(t,x)\le 0~~\text{for all $1\le i\le l$}\bigg\}= \emptyset.
$$
In this section we will give an answer to the question by using the concept of half-continuity. For the convenience of the reader, we summarize results on half-continuous maps in the next subsection. The interested reader is referred to \cite{P.Bich,TK} for more details.
\subsection{The concept of half continuity}
\begin{definition}
	Let $C$ be a subset of a Banach space $X$. A map $f~:~C \rightarrow X$ is said to be half-continuous if for each $x\in C$ with $x\ne f(x)$ there exists $p\in X^{*}$ and a neighborhood $W$ of $x$ in $C$ such that
	$$\langle p,f(y)-y\rangle >0$$
	for all $y\in W$ with $y\ne f(y)$.
\end{definition} 
The following proposition gives a relation between  half-continuity and continuity.

\begin{proposition}[see \cite{TK}, Proposition 3.2]\label{proposition cont and half cont}
	Let $X$ be a Banach space and $C$ be a subset of $X$. Then every continuous map $f~:~C\rightarrow X$ is half-continuous.
\end{proposition}

\begin{remark}[see \cite{TK}]
	There are some half-continuous maps which are not continuous. For example, let $f~:~\mathbb{R}\rightarrow \mathbb{R}$ be defined by
	$$
	f(x)=\left\{\begin{array}{ll}
	3&\textrm{if $x\in[0,1)$,}\\
	2&\textrm{otherwise.}
	\end{array} \right.
	$$
	Then $f$ is half-continuous but not continuous.
\end{remark}

\begin{theorem}[see \cite{P.Bich,TK}] \label{fixed point theorem}
	Let $C$ be a nonempty compact convex subset of a Banach space $X$. If $f~:~C\rightarrow C$ is half-continuous, then $f$ has a fixed point. 
\end{theorem}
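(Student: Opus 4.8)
\section*{Proof proposal for Theorem \ref{fixed point theorem}}

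The plan is to argue by contradiction, using half-continuity to replace the missing continuity of $f$ by a \emph{finite} family of one-sided ``certificates'', and then to close the argument with a finite-dimensional minimax statement. So suppose $f$ has no fixed point, i.e.\ $f(x)\ne x$ for every $x\in C$. Then the defining property of half-continuity applies at \emph{every} point of $C$: for each $x\in C$ there exist $p_x\in X^{*}$ and an open neighbourhood $W_x$ of $x$ in $C$ such that $\langle p_x,\,f(y)-y\rangle>0$ for all $y\in W_x$ (the clause ``$y\ne f(y)$'' is vacuous, since there are no fixed points). Since $C$ is compact, finitely many of these neighbourhoods, say $W_1,\dots,W_m$, cover $C$; write $p_i\in X^{*}$ for the functional attached to $W_i$, and fix a point $x_i\in W_i$.

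Next I would pass to a partition of unity. A compact subset of a Banach space is a normal topological space, so there exist continuous functions $\alpha_1,\dots,\alpha_m:C\to[0,1]$ with $\sum_{i=1}^{m}\alpha_i\equiv 1$ and $\operatorname{supp}\alpha_i\subset W_i$. Define
\[
F:C\longrightarrow X^{*},\qquad F(x)=\sum_{i=1}^{m}\alpha_i(x)\,p_i .
\]
Then $F$ is continuous and takes values in the finite-dimensional subspace $E=\operatorname{span}\{p_1,\dots,p_m\}\subset X^{*}$, so $(x,y)\mapsto\langle F(x),y\rangle$ is continuous on $C\times X$. The key pointwise estimate is
\[
\langle F(x),\,f(x)-x\rangle=\sum_{i=1}^{m}\alpha_i(x)\,\langle p_i,\,f(x)-x\rangle>0\qquad\text{for every }x\in C,
\]
because whenever $\alpha_i(x)>0$ one has $x\in\operatorname{supp}\alpha_i\subset W_i$, hence $\langle p_i,f(x)-x\rangle>0$, and because $\sum_i\alpha_i(x)=1$ forces at least one index to be active, so the displayed quantity is a nontrivial convex combination of strictly positive numbers.

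Finally I would invoke the Ky Fan minimax inequality on the nonempty compact convex set $C$ applied to $\phi:C\times C\to\mathbb{R}$, $\phi(x,y):=\langle F(x),\,y-x\rangle$. For each fixed $y$ the map $x\mapsto\phi(x,y)$ is continuous (hence lower semicontinuous); for each fixed $x$ the map $y\mapsto\phi(x,y)$ is affine (hence quasi-concave); and $\phi(x,x)=0$ for all $x$, so $\sup_{x\in C}\phi(x,x)=0$. Ky Fan's inequality then yields a point $x^{*}\in C$ with $\phi(x^{*},y)\le 0$ for all $y\in C$. Choosing the admissible test point $y=f(x^{*})\in C$ --- this is the only place where the hypothesis $f(C)\subset C$ enters --- gives $\langle F(x^{*}),\,f(x^{*})-x^{*}\rangle\le 0$, contradicting the pointwise estimate of the previous paragraph. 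Hence $f$ must have a fixed point.

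The substance of the argument is concentrated in the last paragraph: it rests on the finite-dimensional topological input (the Ky Fan minimax inequality, which is in turn equivalent to the KKM lemma and to Brouwer's fixed point theorem) together with a careful check that $\phi$ meets its hypotheses; if one prefers, the same reduction can be carried out by transporting the problem onto the standard $(m-1)$-simplex via $x\mapsto(\alpha_1(x),\dots,\alpha_m(x))$ and $\lambda\mapsto\sum_i\lambda_i x_i$ and appealing to Brouwer there. Everything else --- extracting the finite subcover, building the partition of unity, and the sign bookkeeping --- is routine once fixed-point-freeness has been assumed; the conceptual point is simply that half-continuity manufactures, at every point, the one-sided certificate $(p_x,W_x)$, and compactness makes the family of certificates finite, which is exactly what is needed to collapse the problem to finite dimensions.
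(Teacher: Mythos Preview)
The paper does not supply its own proof of this theorem; it merely records the statement and cites \cite{P.Bich,TK} for the proof. Your argument is correct and is essentially the standard proof found in those references: assume fixed-point-freeness, extract a finite family of half-continuity certificates via compactness, aggregate them through a subordinate partition of unity into a continuous $F:C\to X^{*}$ with $\langle F(x),f(x)-x\rangle>0$ everywhere, and then derive a contradiction from Ky Fan's minimax inequality (equivalently, the KKM lemma or a Brouwer reduction to the simplex). There is nothing in the paper to compare against, but your proposal matches the approach of the cited literature; the only cosmetic remark is that the points $x_i\in W_i$ you fix are never used in the main Ky Fan argument and belong only to the alternative Brouwer route you sketch at the end.
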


A direct consequence of Theorem \ref{fixed point theorem} is the following corollary, which is our main tool in the next subsection.

\begin{corollary}\label{Fixed point theorem}
	Let $C$ be a nonempty closed convex subset of a Banach space $X$. If $f:C\rightarrow ~C$ is half-continuous and $f(C)$ is precompact, then $f$ has a fixed point.
\end{corollary}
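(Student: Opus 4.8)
The plan is to deduce Corollary \ref{Fixed point theorem} from Theorem \ref{fixed point theorem} by a standard reduction to the case of a compact convex domain. Since $f(C)$ is precompact, its closure $\overline{f(C)}$ is compact; the natural candidate for the compact convex set on which to apply Theorem \ref{fixed point theorem} is the closed convex hull $C_0 := \overline{\mathrm{conv}}\,(f(C))$. First I would check that $C_0 \subseteq C$: indeed $f(C) \subseteq C$ by hypothesis, $C$ is convex so $\mathrm{conv}(f(C)) \subseteq C$, and $C$ is closed so $\overline{\mathrm{conv}}\,(f(C)) \subseteq C$. Hence the restriction $f|_{C_0} : C_0 \to C$ actually maps into $f(C) \subseteq C_0$, i.e. $f(C_0) \subseteq C_0$, so $f|_{C_0}$ is a self-map of $C_0$.

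Next I would argue that $C_0$ is compact. This is the one genuinely nontrivial point: in an infinite-dimensional Banach space the closed convex hull of a compact set need not a priori be compact, but in fact it is — Mazur's theorem states that the closed convex hull of a compact subset of a Banach space is compact. Since $\overline{f(C)}$ is compact, $C_0 = \overline{\mathrm{conv}}\,(\overline{f(C)})$ is compact. It is also nonempty (as $C \neq \emptyset$ forces $f(C) \neq \emptyset$) and convex by construction, so $C_0$ satisfies all the hypotheses on the domain in Theorem \ref{fixed point theorem}.

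Finally I would observe that half-continuity is inherited by restriction to a subset: if $f : C \to X$ is half-continuous and $C_0 \subseteq C$, then for each $x \in C_0$ with $x \neq f(x)$ the functional $p \in X^*$ and neighborhood $W$ of $x$ in $C$ supplied by the definition restrict to a neighborhood $W \cap C_0$ of $x$ in $C_0$ on which the same inequality $\langle p, f(y) - y\rangle > 0$ holds for all $y \neq f(y)$; thus $f|_{C_0} : C_0 \to C_0$ is half-continuous. Applying Theorem \ref{fixed point theorem} to $f|_{C_0}$ yields a point $x \in C_0 \subseteq C$ with $f(x) = x$, which is the desired fixed point.

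The main obstacle, such as it is, is the compactness of $\overline{\mathrm{conv}}\,(f(C))$: everything else is bookkeeping, but this step relies essentially on Mazur's theorem (equivalently, on total boundedness of convex hulls of totally bounded sets) rather than on soft arguments, so it is the step that genuinely needs the Banach-space structure and is worth stating explicitly rather than glossing over.
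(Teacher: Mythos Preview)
Your proof is correct and follows essentially the same route as the paper's: both pass to $C_0=\overline{\mathrm{conv}}\,(f(C))$, use Mazur's theorem (the paper cites it as \cite[Theorem 3.20]{Rudin}) to get compactness, observe $f(C_0)\subseteq C_0\subseteq C$, and apply Theorem \ref{fixed point theorem} to the restriction. The paper is terser and does not spell out the inheritance of half-continuity under restriction, but the arguments are the same.
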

\begin{proof}
	Since $\overline{f(C)}$ is nonempty compact and $X$ is a Banach space, $\overline{\text{conv}}(f(C))$ is a nonempty compact convex subset of $X$
	(see \cite{Rudin}, Theorem 3.20). Moreover, since $C$ is a closed convex subset of $X$ and $f(C)\subset C$, we have $\overline{\text{conv}}(f(C))\subset C,$
	and hence $f\left(\overline{\text{conv}}(f(C))\right)\subset f(C)\subset \overline{\text{conv}}(f(C)).$ Now restricting $f$ to $\overline{\text{conv}}(f(C))$
	and applying the previous theorem, we obtain the desired conclusion. 
\end{proof}
\subsection{A generalization of classical fixed point theorems}
We now present how to apply Corollary \ref{Fixed point theorem} to address the question posed in Subsection \ref{motivation}.  We first make the following definition. 
\begin{definition}\label{reasonable}
	Let $X$ be a Banach space and let $T~:~[0,1]\times X\longrightarrow X$ be a continuous compact operator. Let $\{F_i\}_{1\leq i\leq l}$ with $l\in \mathbb{N}_{+}$ be a finite sequence of real-valued continuous functions on $[0,1]\times X$. We call $\{F_i\}_{1\leq i\leq l}$ a \textbf{$T-$association} if
	\begin{itemize}
		\item[1.] $F_i(0,0)<0$ for all $1\le i \le l$,
		\item[2.] $\bigg\{T(t,x)~\big|~\text{$(t,x)\in[0,1]\times X$ s.t. $F_i(t,x)\leq 0\quad \forall i=\overline{1,l}$}\bigg\}$ is bounded.
	\end{itemize} 
\end{definition}
\begin{example}\label{example}
	Let $X$ be a Banach space and let $T~:~ [0,1]\times X \longrightarrow X$ be a continuous compact operator. For any constant $a>0$, we define $F~:~[0,1]\times X\longrightarrow \mathbb{R}$ by
	$$
	F(t,x):=\|x\|-a.
	$$
	The sequence $\{F\}$ is of course a $T-$association by the definition. 
\end{example}
Let $\big(X,\,T,\,\{F_i\}_{1\le i\le l}\big)$ be as in Definition \ref{reasonable}. We define $S$ from $[0,1]\times X$ into itself by
\begin{equation}\label{define S}
S(t,x)=\left\{\begin{array}{ll}
\big(1,T(t,x)\big) &\textrm{if $F_i(t,x)\leq 0$ for all $1\le i\le l$},\\
(0,0)&\textrm{otherwise.}
\end{array} \right.
\end{equation}
Since $\Big\{T(t,x)~\big|~ F_i(t,x)\leq 0~~\forall i=\overline{1,l}\Big\}$ is assumed to be bounded, we can take $C>0$ be a constant satisfying 
\begin{equation}\label{bound}
\sup\Big\{\left|\left|T(t,x)\right|\right|~\big|~ F_i(t,x)\leq 0~~\forall i=\overline{1,l}\Big\} \leq C.
\end{equation}
We define
\begin{equation}\label{define mathcal C}
\mathcal{C}:=\big\{(t,x)\in [0,1]\times X\quad\text{s.t.}\quad \|x\|\leq C\big\}.
\end{equation}
Before going further, we establish some basic properties of $S$.
\begin{claim}\label{bounded and  precompact}
	$S$ maps from $\mathcal{C}$ into itself and $S(\mathcal{C})$ is precompact.
\end{claim}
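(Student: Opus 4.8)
The plan is to verify the two assertions in Claim~\ref{bounded and precompact} directly from the definition of $S$ in \eqref{define S} and the bound~\eqref{bound}. The key observation is that $S$ is a ``gluing'' of two maps: the constant map $(0,0)$ and the map $(t,x)\mapsto(1,T(t,x))$, the latter being used precisely on the set where all $F_i$ are nonpositive.

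First I would check that $S(\mathcal C)\subseteq\mathcal C$. Take $(t,x)\in\mathcal C$, so $\|x\|\le C$. There are two cases according to \eqref{define S}. If $F_i(t,x)\le 0$ for all $i$, then $S(t,x)=(1,T(t,x))$, and since $(t,x)$ satisfies the constraint in \eqref{bound} we get $\|T(t,x)\|\le C$; as $1\in[0,1]$, this shows $S(t,x)\in\mathcal C$. Otherwise $S(t,x)=(0,0)\in[0,1]\times X$ with $\|0\|=0\le C$, so again $S(t,x)\in\mathcal C$. Hence $S$ maps $\mathcal C$ into itself. (Note that here I only use that $C>0$ and that $C$ is the bound from \eqref{bound}; I do not yet need continuity or compactness of $T$.)

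Next I would show $S(\mathcal C)$ is precompact. By the case analysis above, every point of $S(\mathcal C)$ lies either in $\{(0,0)\}$ or in $\{1\}\times T(\mathcal C_0)$, where $\mathcal C_0:=\{(t,x)\in\mathcal C:F_i(t,x)\le 0\ \forall i\}$. Thus $S(\mathcal C)\subseteq\{(0,0)\}\cup\big(\{1\}\times\overline{T(\mathcal C_0)}\big)$. Since $T$ is a compact operator, $T$ maps bounded sets to precompact sets; $\mathcal C_0\subseteq\mathcal C$ is bounded (indeed $\|x\|\le C$ on it and $t\in[0,1]$), so $\overline{T(\mathcal C_0)}$ is compact in $X$. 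Therefore $\{(0,0)\}\cup\big(\{1\}\times\overline{T(\mathcal C_0)}\big)$ is a union of a singleton and a compact set, hence compact in $[0,1]\times X$, and $S(\mathcal C)$ is contained in it, so $S(\mathcal C)$ is precompact.

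I do not expect any serious obstacle here; the statement is essentially bookkeeping built on the definition~\eqref{define S}, the choice of $C$ in~\eqref{bound}, and the compactness of $T$. The one point that deserves a line of care is making sure the ``otherwise'' branch is handled: one must not forget that $(0,0)$ also has to lie in $\mathcal C$, which is immediate since $C>0$. (The continuity of $T$ and the $F_i$, as well as the condition $F_i(0,0)<0$, will be needed later to establish half-continuity of the associated map and to run the fixed-point argument via Corollary~\ref{Fixed point theorem}, but they play no role in this particular claim.)
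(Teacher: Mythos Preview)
Your proof is correct and follows exactly the approach the paper has in mind; the paper's own proof is simply the one-sentence remark that the claim ``is a direct consequence of the definition of $S$ and the fact that $T$ is compact,'' and your argument is nothing more than the careful unpacking of that sentence.
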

\begin{proof}
	This claim is a direct consequence of the definition of $S$ and the fact that $T$ is compact.
\end{proof}
\begin{claim}\label{FP of S and T}
	If $(t_0,x_0)$ is a fixed point of $S$, then $(t_0,x_0)=(1,T(1,x_0))$ and $F_i(1,x_0)\leq 0$ for all $1\le i\le l$.
\end{claim}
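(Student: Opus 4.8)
The plan is to argue by cases according to which branch of the piecewise definition \eqref{define S} of $S$ is active at the fixed point. Suppose $(t_0,x_0)\in\mathcal{C}$ satisfies $S(t_0,x_0)=(t_0,x_0)$. Either $F_i(t_0,x_0)\le 0$ for all $1\le i\le l$, or there is some index $i$ with $F_i(t_0,x_0)>0$. I would dispose of the second alternative first, since that is where the defining hypotheses of a $T$-association enter.

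In the second case, by definition $S(t_0,x_0)=(0,0)$, so the fixed-point equation forces $(t_0,x_0)=(0,0)$. But then property~1 in Definition \ref{reasonable} gives $F_i(0,0)<0$ for \emph{every} $i$, in particular $F_i(t_0,x_0)<0$, contradicting $F_i(t_0,x_0)>0$. Hence this case cannot occur, and we must be in the first case.

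In the first case, $S(t_0,x_0)=\big(1,T(t_0,x_0)\big)$, so equating coordinates with $(t_0,x_0)$ yields $t_0=1$ and $x_0=T(t_0,x_0)=T(1,x_0)$. Moreover the defining condition of this branch, $F_i(t_0,x_0)\le 0$ for all $i$, together with $t_0=1$, is exactly $F_i(1,x_0)\le 0$ for all $1\le i\le l$. This is the desired conclusion.

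There is no real obstacle here: the statement is essentially a bookkeeping consequence of the construction of $S$ and condition~1 of a $T$-association, and the only subtlety is noticing that the ``otherwise'' branch is self-excluding because its only possible fixed point $(0,0)$ lies in the interior of the region where all $F_i$ are negative. Accordingly I would keep the proof to the two short paragraphs above.
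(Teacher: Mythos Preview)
Your proof is correct and essentially the same as the paper's. The only cosmetic difference is that you split into cases according to which branch of $S$ is active, while the paper first rules out $(t_0,x_0)=(0,0)$ and then observes that a non-$(0,0)$ fixed point must lie in the first branch; the underlying logic is identical.
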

\begin{proof}
	If $(t_0,x_0)=(0,0)$, we have by our assumption that $F_i(t_0,x_0)=F_i(0,0)<0$ for all $1\le i\le l$. However, by the definition of $S$ this leads to the contradiction that
	$$
	(0,0)=(t_0,x_0)=S(t_0,x_0)=(1,T(t_0,x_0)).
	$$
	Thus, we establish
	$(t_0,x_0)\ne (0,0)$ and hence by the definition of $S$ 
	$$
	F_i(t_0,x_0)\leq 0 \quad \text{for all}\quad 1\le i\le l \quad \text{and}\quad (t_0,x_0)=S(t_0,x_0)=(1,T(t_0,x_0)).
	$$ The proof is completed. 
\end{proof}
\begin{claim}\label{at F ne 0}
	$S$ is half continuous at all $(t,x)$ satisfying $\prod_{i=1}^lF_i(t,x)\ne 0$ or $F_i(t,x)>0$ for some $i\in \{1,...,l\}$.
\end{claim}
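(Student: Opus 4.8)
The strategy is to notice that at any point of the indicated set the piecewise rule \eqref{define S} is locked into a single branch on a whole neighborhood, so that $S$ agrees locally with a continuous map and half-continuity reduces to essentially the argument behind Proposition \ref{proposition cont and half cont}. First I would verify that the set of $(t,x)$ with $\prod_{i=1}^{l}F_i(t,x)\neq 0$, or with $F_i(t,x)>0$ for some $i$, is exactly $\mathcal{O}_1\cup\mathcal{O}_2$, where $\mathcal{O}_1:=\{(t,x):F_i(t,x)>0\ \text{for some}\ i\}$ and $\mathcal{O}_2:=\{(t,x):F_i(t,x)<0\ \text{for all}\ i\}$: if all the $F_i(t,x)$ are nonzero but not all negative, then one of them is positive, and conversely each of these two properties plainly implies $\prod F_i\neq 0$ or some $F_i>0$. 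Since the $F_i$ are continuous, $\mathcal{O}_1$ and $\mathcal{O}_2$ are open in $\mathcal{C}$, and \eqref{define S} gives $S\equiv(0,0)$ on $\mathcal{O}_1$ and $S(s,y)=(1,T(s,y))$ on $\mathcal{O}_2$.

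Next, fix $(t,x)$ in the set and choose an open neighborhood $W_0\subset\mathcal{C}$ of $(t,x)$ contained in whichever of $\mathcal{O}_1,\mathcal{O}_2$ contains $(t,x)$; then $S|_{W_0}$ is continuous, being constant in the first case and equal to $(1,T(\cdot,\cdot))$ (hence continuous, since $T$ is) in the second. If $S(t,x)=(t,x)$ there is nothing to prove; otherwise set $\delta:=\|S(t,x)-(t,x)\|>0$, use Hahn--Banach to pick $p\in(\mathbb{R}\times X)^{*}$ with $\|p\|=1$ and $\langle p,S(t,x)-(t,x)\rangle=\delta$, and shrink $W_0$ to a neighborhood $W$ of $(t,x)$ on which $\|(S(y)-y)-(S(t,x)-(t,x))\|<\delta/2$ --- possible since $y\mapsto S(y)-y$ is continuous on $W_0$. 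Then $\langle p,S(y)-y\rangle\geq\delta-\delta/2>0$ for all $y\in W$, in particular for those with $y\neq S(y)$, which is precisely the half-continuity condition at $(t,x)$.

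I do not expect a genuine obstacle: the whole point is the local observation that \eqref{define S} coincides with one of its two branches on a neighborhood of every point of $\mathcal{O}_1\cup\mathcal{O}_2$ --- in particular that the ``otherwise'' branch persists throughout a neighborhood as soon as some $F_i(t,x)>0$, by continuity of the $F_i$. The only step needing a little care is the set-theoretic bookkeeping identifying the statement's set with $\mathcal{O}_1\cup\mathcal{O}_2$; the points deliberately excluded are exactly those where some $F_i$ vanishes and none is positive, where $S$ may truly jump and a separate treatment is required.
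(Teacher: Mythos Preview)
Your argument is correct and follows the same route as the paper: observe that on a neighborhood of any such point $S$ coincides with one of its two branches and is therefore continuous there, whence half-continuity follows. The only difference is cosmetic --- the paper simply states that $S$ is continuous at these points and invokes Proposition~\ref{proposition cont and half cont}, whereas you spell out the decomposition $\mathcal{O}_1\cup\mathcal{O}_2$ and reprove the local Hahn--Banach step of that proposition by hand.
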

\begin{proof}
	Since $T$ and $F_i$ are continuous,  the definition of $S$ gives us that so is $S$ at all $(t,x)$ satisfying $\prod_{i=1}^lF_i(t,x)\ne 0$ or $F_i(t,x)>0$ for some $i\in \{1,...,l\}$. Thus, the claim follows by Proposition \ref{proposition cont and half cont}. 
\end{proof} 
\begin{claim}\label{at x ne tT}
	$S$ is half continuous at all $(t,x)$ satisfying $x\ne tT(t,x)$.  
\end{claim}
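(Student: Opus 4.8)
The plan is to fix a point $(t,x)$ with $x\ne tT(t,x)$ and to produce, directly, the dual vector and the neighbourhood required by the definition of half-continuity, viewing the domain inside the Banach space $\RR\times X$ whose dual we identify with $\RR\times X^{*}$ under the pairing $\langle(\alpha,q),(s,z)\rangle=\alpha s+\langle q,z\rangle$. First I would check that $(t,x)\ne S(t,x)$, so that there is something to prove: if $F_i(t,x)\le 0$ for all $i$, then $S(t,x)=(1,T(t,x))$ and the equality $(t,x)=S(t,x)$ would force $t=1$ and $x=T(1,x)=tT(t,x)$, against our hypothesis; if instead $F_i(t,x)>0$ for some $i$, then $S(t,x)=(0,0)$ and the equality would give $F_i(0,0)>0$, contradicting item~1 of Definition~\ref{reasonable}.

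The heart of the argument is the choice of the functional. Put $m:=\|tT(t,x)-x\|>0$ and, by the Hahn--Banach theorem, choose $q\in X^{*}$ with $\langle q,\,tT(t,x)-x\rangle=m$; then set $\alpha:=-\langle q,T(t,x)\rangle$ and $p:=(\alpha,q)\in\RR\times X^{*}$. The reason for this particular $\alpha$ is the following observation: for every $y=(s,z)$ in the domain, formula~\eqref{define S} shows that $S(y)-y$ equals $(1-s,\,T(s,z)-z)$ when $F_i(y)\le 0$ for all $i$, and equals $(-s,-z)$ otherwise, so that $\langle p,S(y)-y\rangle$ is always one of the two quantities
$$
\Phi_1(y):=\alpha(1-s)+\langle q,\,T(s,z)-z\rangle,
\qquad
\Phi_2(y):=-\alpha s-\langle q,z\rangle .
$$
Both $\Phi_1$ and $\Phi_2$ are continuous on the domain because $T$ is continuous, and the calibration $\alpha=-\langle q,T(t,x)\rangle$ is exactly what is needed to make $\Phi_1(t,x)=\Phi_2(t,x)=\langle q,\,tT(t,x)-x\rangle=m$, as a short computation shows.

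To finish, I would use continuity: since $\Phi_1$ and $\Phi_2$ are continuous and both equal $m>0$ at $(t,x)$, there is a neighbourhood $W$ of $(t,x)$ in $\mathcal{C}$ on which $\Phi_1>m/2$ and $\Phi_2>m/2$ hold simultaneously. Then for every $y\in W$ --- in particular for every $y\in W$ with $y\ne S(y)$ --- the number $\langle p,S(y)-y\rangle$ equals $\Phi_1(y)$ or $\Phi_2(y)$, hence exceeds $m/2>0$. This is precisely the inequality $\langle p,S(y)-y\rangle>0$ demanded by the definition, so $S$ is half-continuous at $(t,x)$.

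The one genuine difficulty --- and the reason Proposition~\ref{proposition cont and half cont} cannot simply be invoked --- is that $S$ really is discontinuous along the set where some $F_i$ vanishes, so that $\langle p,S(\cdot)-\cdot\rangle$ is only piecewise continuous, assembled from the two branches $\Phi_1$ and $\Phi_2$. The crux is therefore to force these two branches to agree at $(t,x)$ with a common, strictly positive, value; the choice $\alpha=-\langle q,T(t,x)\rangle$ does exactly this, and the hypothesis $x\ne tT(t,x)$ enters only at this point, to guarantee $m>0$.
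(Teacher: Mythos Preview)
Your proof is correct and in fact somewhat cleaner than the paper's. Both arguments rest on the same observation you make explicit: for any $y=(s,z)$ the quantity $\langle p,S(y)-y\rangle$ is one of the two continuous branches $\Phi_1(y)=\alpha(1-s)+\langle q,T(s,z)-z\rangle$ or $\Phi_2(y)=-\alpha s-\langle q,z\rangle$, so it suffices to choose $p=(\alpha,q)$ making both branches strictly positive at $(t,x)$. The paper does this by a case split according to whether $t_0=0$ or $t_0\ne0$: in the first case it takes $P(s,z)=ks+p(z)$ with $k$ large enough, and in the second it takes $Q(s,z)=-hs+t_0 p(z)$ with $h$ squeezed between $p(x_0)$ and $t_0 p(T(t_0,x_0))$; in each case one then checks the two branch inequalities separately. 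Your single choice $\alpha=-\langle q,T(t,x)\rangle$ forces $\Phi_1(t,x)=\Phi_2(t,x)=\langle q,\,tT(t,x)-x\rangle=m>0$ simultaneously and so dispenses with the case analysis altogether; this also makes more transparent why the hypothesis $x\ne tT(t,x)$ is exactly the right one. The paper's version, by contrast, displays an interval of workable parameters rather than a distinguished one, but your route is the more economical of the two.

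A trivial remark on your preliminary check that $(t,x)\ne S(t,x)$: in the branch $S(t,x)=(0,0)$ you invoke item~1 of Definition~\ref{reasonable}, but in fact $(t,x)=(0,0)$ would already give $x=0=tT(t,x)$, contradicting the hypothesis directly; either argument is fine.
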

\begin{proof}
	Let $(t_0,x_0)\in [0,1]\times X$ satisfy $x_0\ne t_0T(t_0,x_0)$. Since $X$ is Banach, there exists $p\in X^*$ s.t.
	\begin{equation}\label{linear continuous function}
	p(x_0)<t_0p(T(t_0,x_0)).
	\end{equation}
	If $t_0=0$, we have by \eqref{linear continuous function} that
	\begin{equation}\label{t=0, 1}
	p(x_0)<0.
	\end{equation}
	We define
	$$
	P(t,x):=kt+p(x),
	$$ 
	where $k$ is a constant satisfying
	\begin{equation}\label{t=0, 2}
	k+p(T(0,x_0))-p(x_0)>0.
	\end{equation}
	It is easy to check that $P\in \big([0,1]\times X\big)^*$. Since $T$ and $p$ are continuous, by \eqref{t=0, 1}-\eqref{t=0, 2}  there exists a neighborhood $B$ of $(0,x_0)$ s.t. for all $(t,x)\in B$
	\begin{equation}\label{x ne tT}
	\left\{\begin{array}{ll}
	k(1-t)+p(T(t,x))-p(x)>0,\\
	-tk - p(x)>0.
	\end{array} \right.
	\end{equation}
	It follows that $P(S(t,x))-P(t,x)>0$ for all $(t,x)\in B$, and hence $S$ is half continuous at $(0,x_0)$ by the definition.\\
	\\
	Now assume that $t_0\ne 0$. We define
	$$
	Q(t,x):=-ht+t_0p(x), 
	$$
	where thanks to \eqref{linear continuous function}, $h$ is a constant satisfying
	\begin{equation}\label{constant h}
	p(x_0)<h<t_0p(T(t_0,x_0)).
	\end{equation}
	We may easily check that $Q\in \big([0,1]\times X\big)^*$. Note that  by \eqref{constant h} 
	$$
	\left\{\begin{array}{ll}
	-h(1-t_0)+t_0\big(p(T(t_0,x_0))-p(x_0)\big)>0,\\
	ht_0 - t_0p(x_0)>0.
	\end{array} \right.
	$$
	Since $T$ and $p$ are continuous, it follows that there exists a neighborhood $B_1$ of $(t_0,x_0)$ s.t. for all $(t,x)\in B_1$
	\begin{equation}
	\left\{\begin{array}{ll}
	-h(1-t)+t_0\big(p(T(t,x))-p(x)\big)>0,\\
	ht - t_0p(x)>0.
	\end{array} \right.
	\end{equation}
	In other words, $Q\big(S(t,x)\big)-Q(t,x)>0$ for all $(t,x)\in B_1$, and hence $S$ is half continuous at $(t_0,x_0)$ by the definition. The proof is completed.
\end{proof}
We now state the main result of this section.
\begin{theorem}\label{theorem generalization}
	Let $X$ be a Banach space and assume that $T~:~ [0,1]\times X \longrightarrow X$ is a continuous compact operator. Assume furthermore that  $\{F_i\}_{1\le i\le l}$ with $l\in \mathbb{N}_+$ is a $T$-association. Then at least one of the following assertions is true:
	\begin{itemize}
		\item[(i)] $T(1,.)$ has a fixed point,
		\item[(ii)] $\bigg\{(t,x)\in[0,1]\times X~\big|~ x=tT(t,x),\,\prod_{i=1}^{l}F_i(t,x)=0~\text{and}~ F_i(t,x)\leq 0~ \forall i=\overline{1,l}\bigg\}\ne \emptyset$.
	\end{itemize}
\end{theorem}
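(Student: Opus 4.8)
The plan is to argue by dichotomy: assume that assertion (ii) fails, and deduce assertion (i). The whole argument is an assembly of the claims already established, applied to the auxiliary map $S$ of \eqref{define S} on the set $\mathcal{C}$ of \eqref{define mathcal C}. Throughout one regards $\mathbb{R}\times X$ (equivalently $[0,1]\times X$ as used in Claims \ref{at F ne 0}--\ref{at x ne tT}) as a Banach space, so that $\mathcal{C}=[0,1]\times\{\|x\|\le C\}$ is a nonempty closed convex subset of it (nonempty because $C>0$, so $(0,0)\in\mathcal{C}$).

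First I would recall, via Claim \ref{bounded and precompact}, that $S(\mathcal{C})\subset\mathcal{C}$ and that $S(\mathcal{C})$ is precompact; this is exactly the hypothesis needed to apply Corollary \ref{Fixed point theorem} to $S|_{\mathcal{C}}$, the one remaining ingredient being half-continuity of $S$ on $\mathcal{C}$.

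The key step is to show that, under the negation of (ii), $S$ is half-continuous at every point of $[0,1]\times X$ (in particular on $\mathcal{C}$). Decompose $[0,1]\times X$ into three pieces: the set where $\prod_{i=1}^l F_i(t,x)\ne 0$ or $F_i(t,x)>0$ for some $i$, on which $S$ is half-continuous by Claim \ref{at F ne 0}; the set where $x\ne tT(t,x)$, on which $S$ is half-continuous by Claim \ref{at x ne tT}; and the complement of the union of these two, which consists precisely of the points $(t,x)$ satisfying simultaneously $\prod_{i=1}^l F_i(t,x)=0$, $F_i(t,x)\le 0$ for all $i$, and $x=tT(t,x)$. But that third set is exactly the set appearing in (ii); if (ii) is false it is empty, so the first two claims already cover all of $[0,1]\times X$, and $S$ is half-continuous everywhere.

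It then remains to apply Corollary \ref{Fixed point theorem} to the half-continuous self-map $S|_{\mathcal{C}}$ with precompact image: it yields a fixed point $(t_0,x_0)$ of $S$. By Claim \ref{FP of S and T} this forces $(t_0,x_0)=(1,T(1,x_0))$, i.e. $x_0=T(1,x_0)$, so $T(1,\cdot)$ has a fixed point and (i) holds. The only delicate point of the whole argument is the half-continuity of $S$ at the ``interface'' points where some $F_i$ vanishes while $x=tT(t,x)$ — but that has already been handled in Claims \ref{at F ne 0} and \ref{at x ne tT} through the explicit construction of separating functionals $P$ and $Q$, so under the negation of (ii) nothing substantial is left beyond combining these facts and checking the (routine) Banach-space bookkeeping for $\mathbb{R}\times X$ and for $\mathcal{C}$.
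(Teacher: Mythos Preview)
Your proposal is correct and follows essentially the same approach as the paper: both arguments assemble Claims~\ref{bounded and  precompact}--\ref{at x ne tT} together with Corollary~\ref{Fixed point theorem}, the only cosmetic difference being that the paper assumes (i) fails and deduces (ii), whereas you run the contrapositive in the other direction. The decomposition of $[0,1]\times X$ you describe is exactly the one implicit in the paper's final sentence.
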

\begin{remark}
	Let $(X,\,T,\,F)$ be as in Example \ref{example}. Assume that $T(1,.)$ has no fixed point. Then Theorem \ref{theorem generalization} tells us that for all $a>0$ there exists $(t_a,x_a)$ s.t. $x_a=t_aT(t_a,x_a)$ and $\|x_a\|=a$. In particular, the set 
	$$
	K=\{(t,x)\in[0,1]\times X~|~x=tT(t,x) \}
	$$ 
	is unbounded. Hence Theorem \ref{theorem generalization} is a generalization of Schaefer's fixed point theorem.
\end{remark}
\begin{proof}
	Let  $S,\,\mathcal{C}$  be defined in \eqref{define S},\,\eqref{define mathcal C} respectively. Assume that $T(1,.)$ has no fixed point. We need to verify that the second assertion is true.
	
	In fact, since $T(1,.)$ has no fixed point, we obtain by Claim \ref{FP of S and T} that neither does $S$. It follows  by Corollary \ref{Fixed point theorem} and Claim \ref{bounded and  precompact}  that $S$ is not half continuous on $\mathcal{C}$. Therefore, Claims \ref{at F ne 0} and \ref{at x ne tT} give us that there exists $(t,x)\in [0,1]\times X$ s.t. $x=tT(t,x)$, $\prod_{i=1}^{l}F_i(t,x)=0$ and $F_i(t,x)\leq 0$ for all $i=\overline{1,l}$, otherwise $S$ is half continuous which is a contradiction. The proof is completed.
\end{proof} 
%%%%%%%%%%%%%%%%%%%%%%%%%%%%%%%%%%%%%%%%%%%%%%%%%%%%%%%%%%%%%%%%%%%%%%%%%%%%%%%%%%%%%%%%%%%%%%%%%%%%%%%%%%%%%%%%%%%%%%%%%%%%%%%%%%%%%%%%%%%%%%%%%%%%%%%%%%%%%%%%%%%%%%%%%%%%%%%%%%%%%%%%%%%%%
\section{The Lichnerowicz equation}
In this section we will review some standard facts about the Lichnerowicz equation on a compact $n-$manifold $M$:
\begin{equation}\label{Lichnerowicz}
\frac{4(n-1)}{n-2}\Delta \varphi+R\varphi+\frac{n-1}{n}\tau^{2}\varphi^{N-1}=\frac{w^{2}}{\varphi^{N+1}},
\end{equation}
here we remind the reader that 
$$
\Delta \varphi = - g^{ij}\big(\del_i\del_j - \Gamma^k_{ij}\del_k\big)\varphi.
$$
From now on, we use standard notations for function spaces, such as $L^{p}$, $C^{k}$, and Sobolev spaces $W^{k,p}$. It will be clear from the context if the notation refers to a space of functions on $M$, or a space of sections of some bundle over $M$. For spaces of functions which embed into $L^{\infty}$, the subscript $+$ is used to indicate the cone of positive functions.
We will sometimes write, for instance, $C(\alpha_{1},\alpha_{2})$ to indicate that a constant $C$ depends only on $\alpha_{1}$ and $\alpha_{2}$.

Given a function $w$ and $p>n$, we say that $\varphi_{+}\in W_{+}^{2,p}$ is a \textit{supersolution} to (\ref{Lichnerowicz}) if
$$
\frac{4(n-1)}{n-2}\Delta \varphi_{+}+R\varphi_{+}+\frac{n-1}{n}\tau^{2}\varphi_{+}^{N-1}\geq\frac{w^{2}}{\varphi_{+}^{N+1}}.
$$
A \textit{subsolution} is defined similarly with the reverse inequality.
\begin{proposition}[see \cite{Isenberg}, \cite{MaxwellRoughCompact}] \label{lemma method sub-super}
	Assume $g\in W^{2,p}$ and $w,\tau\in L^{2p}$ for some $p>n$. If $\varphi_{-},\varphi_{+}\in W_{+}^{2,p}$ are a subsolution and a supersolution respectively to (\ref{Lichnerowicz}) associated with a fixed $w$ such that $\varphi_{-}\leq \varphi_{+}$, then there exists a solution $\varphi\in W_{+}^{2,p}$ to (\ref{Lichnerowicz}) such that $\varphi_{-}\leq \varphi\leq \varphi_{+}$.
\end{proposition}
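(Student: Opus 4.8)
The proof is the by-now-classical monotone iteration (method of sub- and supersolutions), carried out in the $W^{2,p}$ category; I sketch the steps. Write $a_0:=\frac{4(n-1)}{n-2}$ and, for $s>0$,
\[
f(x,s):=R(x)\,s+\frac{n-1}{n}\,\tau(x)^2\,s^{N-1}-w(x)^2\,s^{-N-1},
\]
so that \eqref{Lichnerowicz} reads $a_0\Delta\varphi+f(\cdot,\varphi)=0$, and $\varphi_+$ (resp.\ $\varphi_-$) being a supersolution (resp.\ subsolution) means $a_0\Delta\varphi_++f(\cdot,\varphi_+)\ge 0$ (resp.\ $\le 0$). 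Since $p>n$, the embedding $W^{2,p}\hookrightarrow C^0(M)$ together with positivity of $\varphi_\pm$ on the compact $M$ provides constants $0<m\le M<\infty$ with $m\le\varphi_-\le\varphi_+\le M$ everywhere.

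The first step is to monotonize the nonlinearity. Applying the mean value theorem to $s\mapsto s$, $s\mapsto s^{N-1}$, $s\mapsto s^{-N-1}$ on $[m,M]$ (using $N-2=\frac{4}{n-2}>0$), one gets that for a.e.\ $x$ and all $m\le s_1\le s_2\le M$
\[
f(x,s_2)-f(x,s_1)\le \Big(|R(x)|+\tfrac{n-1}{n}(N-1)M^{N-2}\tau(x)^2+(N+1)m^{-N-2}w(x)^2\Big)(s_2-s_1).
\]
Set $\Lambda:=|R|+\tfrac{n-1}{n}(N-1)M^{N-2}\tau^2+(N+1)m^{-N-2}w^2+1$; since $R\in L^p$ (as $g\in W^{2,p}$) and $\tau^2,w^2\in L^p$, we have $\Lambda\in L^p_+$ with $\Lambda\ge 1$, and $s\mapsto\Lambda(x)\,s-f(x,s)$ is nondecreasing on $[m,M]$ for a.e.\ $x$. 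Consider $\mathcal A:=a_0\Delta+\Lambda$. Its associated bilinear form is coercive on $W^{1,2}(M)$ because $a_0>0$ and $\Lambda\ge1$, so by Lax--Milgram together with $L^p$ elliptic regularity for rough coefficients, $\mathcal A:W^{2,p}\to L^p$ is an isomorphism; moreover testing $\mathcal Au=F$ against the negative part $u_-=\min(u,0)$ gives $a_0\|\nabla u_-\|_{L^2}^2+\|u_-\|_{L^2}^2\le\int_M F\,u_-$, so $F\ge 0\Rightarrow u\ge 0$, i.e.\ $\mathcal A^{-1}$ is positivity preserving.

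Next define the iterates $\varphi_0:=\varphi_+$ and $\varphi_{k+1}:=\mathcal A^{-1}\big(\Lambda\varphi_k-f(\cdot,\varphi_k)\big)$; each right-hand side lies in $L^p$ whenever $m\le\varphi_k\le M$ pointwise (then $\varphi_k,\varphi_k^{N-1},\varphi_k^{-N-1}\in L^\infty$, so products with the $L^p$ data stay in $L^p$). I would then verify by induction that $\varphi_-\le\varphi_{k+1}\le\varphi_k\le\varphi_+$. The bound $\varphi_{k+1}\le\varphi_k$ follows from $\mathcal A(\varphi_k-\varphi_{k+1})\ge 0$: for $k=0$ one uses that $\varphi_+$ is a supersolution, so $\mathcal A\varphi_0=a_0\Delta\varphi_++\Lambda\varphi_+\ge\Lambda\varphi_+-f(\cdot,\varphi_+)=\mathcal A\varphi_1$, and for $k\ge1$ one uses $\varphi_k\le\varphi_{k-1}$ and the monotonicity of $s\mapsto\Lambda s-f(\cdot,s)$. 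The bound $\varphi_{k+1}\ge\varphi_-$ follows from $\mathcal A(\varphi_{k+1}-\varphi_-)\ge0$: since $\varphi_-\le\varphi_k$ and by monotonicity, $\mathcal A\varphi_{k+1}=\Lambda\varphi_k-f(\cdot,\varphi_k)\ge\Lambda\varphi_--f(\cdot,\varphi_-)\ge a_0\Delta\varphi_-+\Lambda\varphi_-=\mathcal A\varphi_-$, the last step because $\varphi_-$ is a subsolution. Thus each $\varphi_k\in W^{2,p}_+$ takes values in $[m,M]$, the sequence decreases pointwise, and is bounded below by $m$, so $\varphi_k\downarrow\varphi$ pointwise with $\varphi_-\le\varphi\le\varphi_+$. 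By dominated convergence (every term dominated by a fixed $L^p$ function) $\Lambda\varphi_k-f(\cdot,\varphi_k)\to\Lambda\varphi-f(\cdot,\varphi)$ in $L^p$, hence by continuity of $\mathcal A^{-1}$ the iterates converge in $W^{2,p}$ to $\mathcal A^{-1}\big(\Lambda\varphi-f(\cdot,\varphi)\big)$, which must coincide with $\varphi$. Therefore $\varphi\in W^{2,p}$, $\varphi\ge m>0$, $\mathcal A\varphi=\Lambda\varphi-f(\cdot,\varphi)$ --- i.e.\ $\varphi$ solves \eqref{Lichnerowicz} --- and $\varphi_-\le\varphi\le\varphi_+$, which is the claim.

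The only genuinely delicate point is the low regularity of the metric: since $R_g$ is merely in $L^p$, the shift $\Lambda$ must be taken to be an $L^p$ function rather than a constant, and one has to invoke the $L^p$ theory --- unique solvability and the weak maximum principle --- for the second-order operator $a_0\Delta+\Lambda$ with an $L^p$ zeroth-order coefficient. Everything else is the routine monotone-iteration bookkeeping, which is why the statement is attributed to \cite{Isenberg,MaxwellRoughCompact}.
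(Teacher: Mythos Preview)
Your proof is correct and follows exactly the standard monotone-iteration (sub/supersolution) argument that the cited references \cite{Isenberg,MaxwellRoughCompact} use; the paper itself gives no proof of this proposition, merely stating it with attribution. Your handling of the low-regularity issue --- taking the shift $\Lambda$ in $L^p$ rather than as a constant, and invoking the $L^p$ elliptic theory and weak maximum principle for $a_0\Delta+\Lambda$ --- is the right adaptation and matches what Maxwell does in the rough-metric setting.
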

Next let us denote by $\mathcal{Y}_g$ the Yamabe invariant of the conformal class of $g$, that is
$$
\mathcal{Y}_{g}=\inf_{\substack{f\in C^{\infty}(M)\\f\ne 0}}\frac{\frac{4(n-1)}{n-2}\int_{M}{|\nabla f|^{2}dv}+\int_{M}{Rf^{2}dv}}{\|f\|^{2}_{L^{N}(M)}}.
$$
The following theorem provides existence and uniqueness of solutions to \eqref{Lichnerowicz}.
\begin{theorem}[see \cite{MaxwellRoughCompact}]\label{theorem. maxwell1}
	Assume $w,\tau\in L^{2p}$ and $g\in W^{2,p}$ for some $p>n$. Then there exists a positive solution $\varphi\in W^{2,p}_{+}$ to (\ref{Lichnerowicz}) if and only if one of the following assertions is true.
	\begin{itemize}
		\item[1.] $\mathcal{Y}_{g}>0$ and $w\ne 0$,
		\item[2.] $\mathcal{Y}_{g}=0$ and $w\ne 0$, $\tau\ne 0$,
		\item[3.] $\mathcal{Y}_{g}<0$ and there exists $\hat{g}$ in the conformal class of $g$ such that $R_{\hat{g}}=-\frac{n-1}{n}\tau^{2}$,
		\item[4.]  $\mathcal{Y}_{g}=0$ and $w\equiv 0$, $\tau\equiv 0.$
	\end{itemize}
	In Cases $1-3$ the solution is unique. In Case $4$ any two solutions are related by a scaling by a positive constant multiple.
\end{theorem}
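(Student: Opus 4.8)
The plan is to treat the four cases according to the sign of $\mathcal{Y}_g$ and prove existence using the sub/supersolution method of Proposition~\ref{lemma method sub-super}, together with uniqueness via a monotonicity argument. For the ``only if'' direction, the idea is to integrate the equation against a suitable test function to derive obstructions. First I would record the basic structure: writing $f(\varphi) = \frac{4(n-1)}{n-2}\Delta\varphi + R\varphi + \frac{n-1}{n}\tau^2\varphi^{N-1} - w^2\varphi^{-N-1}$, the right-hand side term $w^2\varphi^{-N-1}$ is decreasing in $\varphi$ while the curvature and $\tau$ terms, in the appropriate conformal gauge, behave monotonically; this is what drives both existence (a constant large enough is a supersolution, a constant small enough is a subsolution, once the sign obstructions are removed) and uniqueness.

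For existence in Case~1 ($\mathcal{Y}_g>0$, $w\neq 0$): using the resolution of the Yamabe problem I may assume $R>0$ (replacing $g$ by a conformal metric and absorbing the conformal factor into $w,\tau$, which only rescales them within $L^{2p}$). Then a small constant $\varepsilon>0$ is a subsolution since $R\varepsilon + \frac{n-1}{n}\tau^2\varepsilon^{N-1} \le w^2\varepsilon^{-N-1}$ fails only where $w$ is small, so one instead uses $\varepsilon\,\psi$ for a suitable positive $\psi$ solving a linear equation, or argues pointwise after choosing $\varepsilon$ tiny; a large constant $M$ is a supersolution since $RM \ge w^2 M^{-N-1}$ for $M$ large (here $R>0$ is essential, and $w\neq 0$ is needed so that the subsolution can be taken strictly below). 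Case~2 ($\mathcal{Y}_g=0$): here one uses that $g$ can be taken with $R\equiv 0$, and now $\tau\not\equiv 0$ is what lets a small multiple of an eigenfunction-type subsolution work against the $\tau^2\varphi^{N-1}$ term. Case~3 ($\mathcal{Y}_g<0$): by hypothesis choose the gauge $R = -\frac{n-1}{n}\tau^2$; then a large constant is a supersolution and a small constant a subsolution directly. Case~4 is the trivial linear case $\Delta\varphi = 0$ on a compact manifold, whose positive solutions are exactly the positive constants.

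For the ``only if'' direction and uniqueness: if $w\equiv 0$ and $\mathcal{Y}_g>0$ (or $=0$ with $\tau\not\equiv 0$), testing $\frac{4(n-1)}{n-2}\Delta\varphi + R\varphi + \frac{n-1}{n}\tau^2\varphi^{N-1} = 0$ against $\varphi$ and integrating gives $\frac{4(n-1)}{n-2}\int|\nabla\varphi|^2 + \int R\varphi^2 + \frac{n-1}{n}\int\tau^2\varphi^{N} = 0$, which forces $\varphi\equiv 0$ by the Yamabe-invariant inequality, a contradiction; similarly in Case~2 with $\tau\equiv 0$ one gets $\varphi$ constant but then $w\equiv 0$ is forced. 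If $\mathcal{Y}_g<0$ and no such $\hat g$ exists, existence of $\varphi$ would produce one by rescaling, contradiction. For uniqueness in Cases~1--3, suppose $\varphi_1,\varphi_2$ are two solutions; the standard trick is to consider the ratio or to subtract the equations and integrate against $(\varphi_1^{q}-\varphi_2^{q})$-type test functions so that every remaining term has a definite sign — the $\Delta$ term contributes $\int|\nabla(\varphi_1-\varphi_2)|^2$-type nonnegativity, the $w^2\varphi^{-N-1}$ term contributes the ``wrong'' monotonicity which cooperates after moving it to the correct side, and $\tau^2\varphi^{N-1}$ cooperates too, forcing $\varphi_1 = \varphi_2$; in Case~4 the equation is linear with constant solutions, giving the scaling statement.

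The main obstacle I expect is constructing the subsolution in Cases~1 and~2 with the rough regularity $w,\tau\in L^{2p}$ and $g\in W^{2,p}$: one cannot simply use pointwise constant barriers when $w$ or $R$ vanishes on a large set, so the delicate point is to solve an auxiliary linear equation (e.g. $\frac{4(n-1)}{n-2}\Delta\psi + R\psi = \lambda\psi$ or a perturbation thereof) with enough control to guarantee a genuine ordered pair $\varphi_-\le\varphi_+$ and to keep everything in $W^{2,p}$; this is exactly where the positivity of $\mathcal{Y}_g$ (resp. $\tau\not\equiv 0$) enters quantitatively, via the coercivity of the conformal Laplacian, and where the cited works of Isenberg and Maxwell do the careful work one would invoke.
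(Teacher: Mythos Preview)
The paper does not actually prove this theorem: it is quoted from Maxwell's work \cite{MaxwellRoughCompact} (and, implicitly, Isenberg \cite{Isenberg}) and stated without proof, so there is no ``paper's own proof'' to compare your proposal against. Your sketch follows the standard route taken in those references --- conformal gauge-fixing according to the sign of $\mathcal{Y}_g$, constant (or near-constant) barriers fed into the sub/supersolution machine of Proposition~\ref{lemma method sub-super}, integration identities for the obstructions, and a monotonicity argument for uniqueness --- and you have correctly identified the genuinely delicate point, namely building a subsolution in Cases~1 and~2 at the rough regularity $g\in W^{2,p}$, $w,\tau\in L^{2p}$, which is precisely where the cited works do the real analysis.

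One minor correction to your ``only if'' sketch: the argument that existence of a positive solution in the $\mathcal{Y}_g<0$ case forces the existence of $\hat g$ with $R_{\hat g}=-\frac{n-1}{n}\tau^2$ is not quite ``rescaling'' --- the conformal metric $\varphi^{N-2}g$ produced by a solution has $R_{\hat g} = -\frac{n-1}{n}\tau^2 + w^2\varphi^{-2N}$, which is only $\ge -\frac{n-1}{n}\tau^2$; one then needs a further (nontrivial) prescribed-scalar-curvature step to push it down to equality, and this is where the Yamabe-negative hypothesis is actually used.
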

The next lemma plays an important role in the study of \eqref{Lichnerowicz}. It is called the conformal covariance of the Lichnerowicz equation.
\begin{lemma}[see \cite{Isenberg,MaxwellNonCMC}]\label{lemma. maxwell2}
	Assume $g\in W^{2,p}$ and $w,\tau\in L^{2p}$ for some $p>n$. Assume also that $\theta\in W^{2,p}_{+}$. Define
	$$
	\hat{g}=\theta^{N-2}g,~~~\hat{w}=\theta^{-N}w,~~~\hat{\tau}=\tau.
	$$
	Then $\varphi$ is a supersolution (resp. subsolution) to (\ref{Lichnerowicz}) if and only if $\hat{\varphi}=\theta^{-1}\varphi$ is a supersolution (resp. subsolution) to the conformally transformed equation
	\begin{equation}\label{Lichnerowicz2}
	\frac{4(n-1)}{n-2}\Delta_{\hat{g}} \hat{\varphi}+R_{\hat{g}}\hat{\varphi}+\frac{n-1}{n}\hat{\tau}^{2}\hat{\varphi}^{N-1}=\frac{\hat{w}^{2}}{\hat{\varphi}^{N+1}}.
	\end{equation}
	In particular, $\varphi$ is a solution to (\ref{Lichnerowicz}) if and only if $\hat{\varphi}$ is a solution to (\ref{Lichnerowicz2}).
\end{lemma}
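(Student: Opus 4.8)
The plan is to deduce the lemma from a single classical fact, the conformal covariance of the conformal Laplacian $\mathcal{L}_g:=\frac{4(n-1)}{n-2}\Delta_g+R_g$, together with the elementary remark that the two remaining terms of the Lichnerowicz equation, $\frac{n-1}{n}\tau^2\varphi^{N-1}$ and $w^2\varphi^{-N-1}$, scale with exactly the same conformal weight as $\mathcal{L}_g(\varphi)$. Once this is in place, the whole statement becomes the observation that multiplication by a strictly positive function preserves inequalities and equalities and that $\varphi\mapsto\theta^{-1}\varphi$ is a bijection of $W^{2,p}_+$ onto itself.

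First I would record the pointwise identity
$$
\mathcal{L}_{\hat g}(f)=\theta^{1-N}\,\mathcal{L}_g(\theta f),\qquad \hat g=\theta^{N-2}g,
$$
valid for every $f\in W^{2,p}$ (recall $1-N=-(N-1)=-\frac{n+2}{n-2}$ and $N-2=\frac{4}{n-2}$, so this is the usual conformal-Laplacian covariance written in the paper's sign convention for $\Delta$). In the smooth category this is textbook; here one only has $g,\theta\in W^{2,p}$, but since $p>n$ forces $g\in C^1$ with Christoffel symbols in $W^{1,p}$, the transformation rules for $\Delta$ and $R$ under the conformal factor $\theta^{N-2}$ hold in the strong $L^p$ sense — either by carrying out the standard computation coefficient by coefficient, or by approximating $(g,\theta)$ by smooth data and passing to the limit in $L^p$. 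At the same point I would note that, because $W^{2,p}$ is a Banach algebra for $p>n$ that is stable under taking reciprocals of positive elements, one has $\hat g\in W^{2,p}$, $\hat w\in L^{2p}$, $\hat\tau\in L^{2p}$ and $\hat\varphi=\theta^{-1}\varphi\in W^{2,p}_+$, so every object in \eqref{Lichnerowicz2} has the regularity required by Proposition \ref{lemma method sub-super} and Theorem \ref{theorem. maxwell1}.

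Next, writing $\hat\varphi=\theta^{-1}\varphi$ so that $\theta\hat\varphi=\varphi$, I would compute the three terms of \eqref{Lichnerowicz2} one at a time: $\mathcal{L}_{\hat g}(\hat\varphi)=\theta^{1-N}\mathcal{L}_g(\varphi)$ by the covariance identity; $\frac{n-1}{n}\hat\tau^2\hat\varphi^{N-1}=\theta^{1-N}\cdot\frac{n-1}{n}\tau^2\varphi^{N-1}$ since $\hat\tau=\tau$; and $\hat w^2\hat\varphi^{-N-1}=\theta^{-2N}w^2\cdot\theta^{N+1}\varphi^{-N-1}=\theta^{1-N}w^2\varphi^{-N-1}$, using $-2N+(N+1)=1-N$. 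Adding these gives
$$
\mathcal{L}_{\hat g}(\hat\varphi)+\tfrac{n-1}{n}\hat\tau^2\hat\varphi^{N-1}-\frac{\hat w^2}{\hat\varphi^{N+1}}
=\theta^{1-N}\left(\mathcal{L}_g(\varphi)+\tfrac{n-1}{n}\tau^2\varphi^{N-1}-\frac{w^2}{\varphi^{N+1}}\right).
$$
Since $\theta^{1-N}>0$ everywhere on $M$, the left-hand side is $\ge 0$ (resp. $\le 0$, resp. $\equiv 0$) if and only if the parenthesis is, i.e. if and only if $\varphi$ is a supersolution (resp. subsolution, resp. solution) of \eqref{Lichnerowicz}; conversely, applying the same computation with the roles of $g$ and $\hat g$ exchanged and $\theta$ replaced by $\theta^{-1}$ — or simply noting that $\theta\cdot(\theta^{-1}\varphi)=\varphi$ — shows the correspondence $\varphi\leftrightarrow\hat\varphi$ runs in both directions on $W^{2,p}_+$. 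This gives every assertion of the lemma. The only genuinely delicate point is the first step: justifying the covariance of $\mathcal{L}_g$ when $g$ and $\theta$ lie only in $W^{2,p}$ rather than $C^\infty$; this is where the hypothesis $p>n$ (hence the Sobolev embedding into $C^1$ and the algebra property) is used, and it is handled cleanly by the density/approximation argument sketched above.
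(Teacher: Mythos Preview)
The paper does not supply its own proof of this lemma; it is stated with the citation ``see \cite{Isenberg,MaxwellNonCMC}'' and used as a black box. Your argument is correct and is exactly the standard one found in those references: the conformal covariance identity $\mathcal{L}_{\hat g}(\theta^{-1}\varphi)=\theta^{1-N}\mathcal{L}_g(\varphi)$ for the conformal Laplacian, combined with the observation that the nonlinear terms $\tau^2\varphi^{N-1}$ and $w^2\varphi^{-N-1}$ acquire the same factor $\theta^{1-N}$ under the substitutions $\hat\tau=\tau$, $\hat w=\theta^{-N}w$, $\hat\varphi=\theta^{-1}\varphi$, so that the whole Lichnerowicz operator is multiplied by the strictly positive function $\theta^{1-N}$, which preserves the sign of the inequality. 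Your remarks on the $W^{2,p}$, $p>n$, regularity (algebra property, Sobolev embedding, density argument for the covariance identity) are appropriate and match the level of rigor in \cite{MaxwellNonCMC}.
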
 
\begin{lemma}[Maximum principle]\label{lemma. maxpriw} Given $g\in W^{2,p}$ for some $p>n$, we assume that $\theta,~\varphi$ are a supersolution (resp. subsolution) and a positive solution respectively to (\ref{Lichnerowicz}) with a fixed $(\tau,\,w)$. Then 
	$$
	\theta\geq \varphi~(\mbox{resp. $\leq$}).
	$$
	Consequently, for any $w_0,\,w_1\in L^{2p}$ let $\varphi_{0},\,\varphi_1$ be solutions to \eqref{Lichnerowicz} associated with $(\tau,\,w_0),\, (\tau,\,w_1)$ respectively. Assume that $w_{1}^2\geq w_{0}^2$, then $\varphi_{1}\geq \varphi_{0}$. Moreover,  if there exists a constant $c>0$ s.t. $w_1^2 - w_0^2 \ge c$, then $\varphi_1>\varphi_0$.
\end{lemma}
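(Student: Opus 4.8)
The plan is to prove the comparison between a supersolution and a positive solution first, and then read off the two remaining assertions from it.

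\textbf{Step 1 (a supersolution dominates a solution).} The only genuine obstruction is that the term $R\theta$ has no definite sign, so I would first remove it through the conformal covariance of the Lichnerowicz equation (Lemma~\ref{lemma. maxwell2}) applied with conformal factor $\varphi$. Setting $\hat g=\varphi^{N-2}g$, $\hat w=\varphi^{-N}w$, $\hat\tau=\tau$, the function $\hat\varphi:=\varphi^{-1}\varphi\equiv 1$ is a positive solution of \eqref{Lichnerowicz2} and $\hat\theta:=\varphi^{-1}\theta\in W^{2,p}_+$ is a supersolution of it; evaluating \eqref{Lichnerowicz2} at $\hat\varphi\equiv 1$ pins down $R_{\hat g}=\hat w^{2}-\tfrac{n-1}{n}\tau^{2}$. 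It thus suffices to show that a positive supersolution $\hat\theta$ of \eqref{Lichnerowicz2} with this $R_{\hat g}$ satisfies $\hat\theta\ge 1$. I would test the supersolution inequality against $u:=(1-\hat\theta)_+\in W^{1,2}$, supported on the open set $\Omega:=\{\hat\theta<1\}$, substitute the formula for $R_{\hat g}$, and integrate by parts (legitimate since $\hat\theta\in W^{2,p}$ and $u\in W^{1,2}$); using $\nabla u=-\nabla\hat\theta$ and $u>0$ on $\Omega$, this collapses to
\[
\frac{4(n-1)}{n-2}\int_{\Omega}|\nabla\hat\theta|^{2}\,dv_{\hat g}
+\int_{\Omega}\Big[\hat w^{2}\big(\hat\theta^{-N-1}-\hat\theta\big)+\tfrac{n-1}{n}\tau^{2}\big(\hat\theta-\hat\theta^{N-1}\big)\Big]u\,dv_{\hat g}\le 0 .
\]
Since $N>2$, on $\Omega$ one has $0<\hat\theta<1$, hence $\hat\theta^{-N-1}-\hat\theta\ge 0$ and $\hat\theta-\hat\theta^{N-1}\ge 0$, so both integrals are nonnegative and therefore vanish: $\hat\theta$ is locally constant on $\Omega$ and $\hat w\equiv\tau\equiv 0$ a.e.\ on $\Omega$. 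As $\hat\theta\in C^{1}$ and $\hat\theta=1$ on $\partial\Omega$, the only way $\Omega\ne\emptyset$ is $\Omega=M$ with $\hat\theta$ a constant $<1$, which would force $R_{\hat g}\equiv 0$ and hence $\mathcal Y_{\hat g}=\mathcal Y_g\le 0$ (test the Yamabe quotient against constants) --- incompatible with the standing positive Yamabe hypothesis. Hence $\hat\theta\ge 1$, i.e.\ $\theta\ge\varphi$; the subsolution case is symmetric, testing against $(\hat\theta-1)_+$.

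\textbf{Step 2 (the $w_0,w_1$ comparison) and Step 3 (strictness).} Since $\varphi_1>0$ and $w_1^2\ge w_0^2$, the identity $\tfrac{4(n-1)}{n-2}\Delta\varphi_1+R\varphi_1+\tfrac{n-1}{n}\tau^2\varphi_1^{N-1}=w_1^2\varphi_1^{-N-1}\ge w_0^2\varphi_1^{-N-1}$ shows $\varphi_1$ is a positive supersolution for the data $(\tau,w_0)$, so Step~1 gives $\varphi_1\ge\varphi_0$. For strictness, put $u:=\varphi_1-\varphi_0\ge 0$, subtract the equations for $(\tau,w_1)$ and $(\tau,w_0)$, and write the differences of the nonlinear terms via the mean value theorem as $b\,u$ with $0\le b\in L^{p}$; splitting $R=R^+-R^-$ yields
\[
\frac{4(n-1)}{n-2}\big(-\Delta_{\mathrm{LB}}\big)u-(R^{+}+b)\,u \;=\; -\Big(\frac{w_1^2-w_0^2}{\varphi_1^{N+1}}+R^{-}u\Big)\;\le\;-\frac{c}{(\max\varphi_1)^{N+1}}\;=:\;-c'<0 .
\]
Thus $u$ is a nonnegative supersolution of a uniformly elliptic divergence-form operator with nonpositive zeroth-order coefficient $-(R^{+}+b)\in L^{p}$ ($p>n>n/2$); if $u$ vanished at a point it would attain a nonpositive interior minimum and the strong maximum principle (weak Harnack inequality) would force $u\equiv 0$, contradicting the strict inequality above. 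Hence $u>0$, i.e.\ $\varphi_1>\varphi_0$.

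\textbf{Main obstacle.} The crux is Step~1 and, within it, the indefinite sign of $R$: the whole argument hinges on conformally normalizing the \emph{solution} to be $\equiv 1$, after which $R_{\hat g}$ is determined by $\hat w$ and $\tau$ and the remaining nonlinearities carry exactly the right sign on $\{\hat\theta<1\}$. The secondary bookkeeping is the low regularity: $g\in W^{2,p}$ gives only $\theta\in C^{1,\alpha}$ and $R\in L^{p}$ (not $L^\infty$), so the naive ``evaluate at the minimum'' must be replaced by the weak computation in Step~1 and by an $L^{p}$-coefficient strong maximum principle in Step~3; neither is hard, but both have to be invoked explicitly.
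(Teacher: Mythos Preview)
Your proof is correct but follows a genuinely different route from the paper's in both Step~1 and Step~3.

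For Step~1 the paper avoids any direct PDE computation: since $\varphi$ is a solution it is a subsolution, and so is $t\varphi$ for every $t\in(0,1]$; choosing $t$ small enough that $t\varphi\le\theta$, Proposition~\ref{lemma method sub-super} manufactures a solution $\varphi'$ with $t\varphi\le\varphi'\le\theta$, and uniqueness (Theorem~\ref{theorem. maxwell1}) forces $\varphi'=\varphi$, hence $\varphi\le\theta$. Your conformal-normalization-plus-test-function argument is perfectly valid and more self-contained, but the paper's trick is shorter because it recycles the sub/super machinery and uniqueness already on the shelf. One caveat: you close Step~1 by invoking a ``standing positive Yamabe hypothesis'', but the lemma carries no such hypothesis. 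What your argument actually produces in the bad case is $\hat w\equiv 0$, $\tau\equiv 0$ and $R_{\hat g}\equiv 0$, i.e.\ precisely Case~4 of Theorem~\ref{theorem. maxwell1}, the only case where uniqueness fails; phrase the contradiction that way and your Step~1 covers exactly the same ground as the paper's.

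For Step~3 the paper again stays elementary: it shows that $\varphi_\epsilon:=\varphi_1-\epsilon$ is a supersolution for the data $(\tau,w_0)$ once $\epsilon>0$ is small (the strictly positive term $(w_1^2-w_0^2)/\varphi_1^{N+1}$ dominates the $O(\epsilon)$ errors), and then Step~1 gives $\varphi_0\le\varphi_\epsilon<\varphi_1$. Your strong-maximum-principle route is fine, but requires invoking a Harnack/Hopf result for $L^p$ zeroth-order coefficients, whereas the paper's perturbation avoids that entirely.
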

\begin{proof} 
	We will prove the supersolution case. The remaining cases are similar. Assume that $\theta,\varphi$ are a supersolution and a positive solution respectively of (\ref{Lichnerowicz}) associated with a fixed $w$. Since $\varphi$ is a solution, $\varphi$ is also a subsolution, and hence, as is easily checked so is $t\varphi$ for all constant $t\in (0,1]$. Since $\min{\theta}>0$, we now take $t$ small enough s.t. $t\varphi\leq \theta$. By Proposition \ref{lemma method sub-super}, we then conclude that there exists a solution $\varphi^{\prime}\in W^{2,p}$ to $(\ref{Lichnerowicz})$ satisfying $t\varphi\leq \varphi^{\prime}\leq \theta$. On the other hand, by uniqueness of positive solutions to (\ref{Lichnerowicz}) given by Theorem \ref{theorem. maxwell1}, we obtain that $\varphi=\varphi^{\prime}$, and hence get the desired conclusion.
	
	Now let $\varphi_{0},\,\varphi_1$ be solutions to \eqref{Lichnerowicz} associated with $(\tau,\,w_0),\, (\tau,\,w_1)$ respectively. If $w_1^2\ge w_0^2$, then $\varphi_1$ is a supersolution to \eqref{Lichnerowicz} associated with $(\tau,\,w_0)$, and hence $\varphi_1\ge \varphi_0$ as we have shown above. In the case where $w_1^2 - w_0^2\ge c$ for some constant $c>0$, we define $\varphi_\epsilon=\varphi_1-\epsilon$ with $\epsilon>0$. We have that
	$$
	\aligned
	&\frac{4(n-1)}{n-2}\Delta \varphi_\epsilon+R\varphi_\epsilon+\frac{n-1}{n}\tau^{2}\varphi_\epsilon^{N-1}-\frac{w_0^{2}}{\varphi_\epsilon^{N+1}}
	\\
	=&\frac{w_1^{2}-w_0^2}{\varphi_1^{N+1}}-w_0^2\bigg(\frac{1}{\varphi_\epsilon^{N+1}}-\frac{1}{\varphi_1^{N+1}}\bigg)-R(\varphi_1-\varphi_\epsilon)-\frac{n-1}{n}\tau^{2}\big(\varphi_1^{N-1}-\varphi_\epsilon^{N-1}\big)
	\endaligned
	$$
Note that the last three terms converge to $0$ as $\epsilon\to 0$ while the first term is strictly positive by the assumption. Then we obtain that 
$$
\frac{4(n-1)}{n-2}\Delta \varphi_\epsilon+R\varphi_\epsilon+\frac{n-1}{n}\tau^{2}\varphi_\epsilon^{N-1}-\frac{w_0^{2}}{\varphi_\epsilon^{N+1}}>0
$$
as long as $\epsilon>0$ is sufficiently small. In other words, $\varphi_\epsilon$ is a supersolution to \eqref{Lichnerowicz} associated with $(\tau,\,w_0)$, and hence $\varphi_0\le \varphi_\epsilon<\varphi_1$. The proof is completed.
\end{proof}
We now restrict our discussion to the Lichnerowicz equation with positive Yamabe invariant. Let $g\in W^{2,p}$ with $p>n$ be a Yamabe-positive metric on $M$. Given $\tau\in L^{2p}$ we consider the map
$\mathcal{L}:\big(L^{\infty}\setminus \{0\}\big)\times [0,1]\rightarrow W_{+}^{2,p}$ defined by
$\mathcal{L}(w,t)=\theta$, where
$$
\frac{4(n-1)}{n-2}\Delta \theta+R\theta=-\frac{n-1}{n}t\tau^{2}\theta^{N-1}+w^{2}\theta^{-N-1}.
$$
Note that $\mathcal{L}$ is well-defined by Case 1 in Theorem \ref{theorem. maxwell1}. The following result ensures continuity of $\mathcal{L}$.
\begin{proposition}[see \cite{MaxwellNonCMC}, \cite{Nguyen}]\label{proposition L continuous}
	$\mathcal{L}$ is a $C^1$-map.
\end{proposition}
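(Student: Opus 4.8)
The plan is to apply the implicit function theorem in Banach spaces. Introduce the map
$$
\Phi~:~\big(L^{\infty}\setminus\{0\}\big)\times[0,1]\times W_{+}^{2,p}\longrightarrow L^{p},
\qquad
\Phi(w,t,\theta)=\frac{4(n-1)}{n-2}\Delta\theta+R\theta+\frac{n-1}{n}t\tau^{2}\theta^{N-1}-w^{2}\theta^{-N-1},
$$
so that, by Case $1$ of Theorem~\ref{theorem. maxwell1}, $\mathcal{L}(w,t)$ is exactly the unique zero of $\Phi(w,t,\cdot)$ in $W_{+}^{2,p}$. Since $p>n$ we have $W^{2,p}\hookrightarrow C^{1,\alpha}\hookrightarrow L^{\infty}$, so $W_{+}^{2,p}$ is open in $W^{2,p}$, $R_{g}\in L^{p}$, and all the pointwise products above land in $L^{p}$.

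First I would check that $\Phi$ is of class $C^{1}$. The maps $\theta\mapsto\frac{4(n-1)}{n-2}\Delta\theta$ and $\theta\mapsto R\theta$ are bounded linear from $W^{2,p}$ to $L^{p}$, and the dependence on $t$ is affine, so these cause no trouble. The only genuinely nonlinear pieces are $\theta\mapsto\theta^{N-1}$ and $\theta\mapsto\theta^{-N-1}$; composing the continuous embedding $W_{+}^{2,p}\hookrightarrow C_{+}^{0}$ with the Nemytskii operators generated by the smooth functions $s\mapsto s^{N-1}$ and $s\mapsto s^{-N-1}$ on $(0,\infty)$ gives $C^{1}$ maps into $C^{0}\hookrightarrow L^{\infty}$; multiplying by the fixed factor $\frac{n-1}{n}t\tau^{2}\in L^{p}$, respectively by $w^{2}$ — bounded bilinear in $(w,\theta)$ — yields $C^{1}$ maps into $L^{p}$. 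Hence $\Phi\in C^{1}$.

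The heart of the matter is to prove that for each $(w,t)$ the partial differential $L:=D_{\theta}\Phi(w,t,\theta)$, evaluated at $\theta=\mathcal{L}(w,t)$, is an isomorphism $W^{2,p}\to L^{p}$. Writing $a=\frac{4(n-1)}{n-2}$ and $f(s)=Rs+\frac{n-1}{n}t\tau^{2}s^{N-1}-w^{2}s^{-N-1}$, the Lichnerowicz equation reads $a\Delta\theta+f(\theta)=0$ and $Lh=a\Delta h+f'(\theta)h$. Its zeroth order coefficient $f'(\theta)$ is not sign-definite because $R$ may be negative — this is the main obstacle. I would remove it by the substitution $h=\theta u$: a direct computation using $a\Delta\theta=-f(\theta)$ gives
$$
L(\theta u)=\theta\Big(a\Delta u-2a\,\theta^{-1}\langle\nabla\theta,\nabla u\rangle+W\theta^{-1}u\Big),
\qquad
W:=f'(\theta)\theta-f(\theta)=(N-2)\tfrac{n-1}{n}t\tau^{2}\theta^{N-1}+(N+2)w^{2}\theta^{-N-1}\ge 0,
$$
and $W\not\equiv 0$ because $w\not\equiv 0$ (recall $N>2$ for $n\ge 3$). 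Thus $Lh=0$ forces $u=\theta^{-1}h$ to solve an elliptic equation with nonnegative, nontrivial potential $W\theta^{-1}$; a maximum principle argument — which in the low-regularity setting $g\in W^{2,p}$, $p>n$, is carried out in \cite{MaxwellNonCMC,Nguyen} (see also \cite{MaxwellRoughCompact}) — forces $u$ to be constant, and then $W\not\equiv 0$ forces $u\equiv 0$, i.e. $\ker L=\{0\}$. On the other hand $h\mapsto f'(\theta)h$ factors through the compact embedding $W^{2,p}\hookrightarrow\hookrightarrow C^{0}$, so $L$ is a compact perturbation of $a\Delta$ and hence Fredholm of index $0$; being injective, it is therefore an isomorphism.

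Finally, the implicit function theorem supplies, near each $(w_{0},t_{0})$, a $C^{1}$ map $(w,t)\mapsto\theta(w,t)\in W_{+}^{2,p}$ with $\Phi\big(w,t,\theta(w,t)\big)=0$, which by the uniqueness in Theorem~\ref{theorem. maxwell1} must coincide with $\mathcal{L}$ near $(w_{0},t_{0})$. Since $C^{1}$-regularity is a local property, $\mathcal{L}$ is $C^{1}$ on all of $\big(L^{\infty}\setminus\{0\}\big)\times[0,1]$, as claimed. The step I expect to demand the most care is the invertibility of $D_{\theta}\Phi$: the conjugation identity above together with the verification that the required elliptic theory — the Fredholm alternative and the strong maximum principle — genuinely goes through for metrics of regularity only $W^{2,p}$ with $p>n$.
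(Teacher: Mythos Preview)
Your proof is correct and, like the paper's, proceeds via the implicit function theorem applied to the defining map $\Phi$ (the paper calls it $F$). The only real difference lies in how invertibility of $D_{\theta}\Phi$ is established. The paper first invokes the conformal covariance of the Lichnerowicz equation (Lemma~\ref{lemma. maxwell2}) to assume without loss of generality that $R>0$; then the zeroth-order coefficient of the linearization,
\[
R+\tfrac{(N-1)(n-1)}{n}\,t\tau^{2}\theta^{N-2}+(N+1)w^{2}\theta^{-N-2}\ \ge\ \min R>0,
\]
is strictly positive, and invertibility follows immediately from standard elliptic theory for $\Delta+c$ with $c>0$. You instead leave $R$ with indefinite sign and remove it by the conjugation $h=\theta u$, which (using $a\Delta\theta=-f(\theta)$) replaces the potential by the manifestly nonnegative quantity $W=f'(\theta)\theta-f(\theta)$; a short computation shows $\theta^{2}Pu=-a\,\mathrm{div}(\theta^{2}\nabla u)+W\theta\,u$, so multiplying $Pu=0$ by $u$ and integrating gives $\nabla u=0$ and $W\theta u^{2}=0$, hence $u\equiv 0$. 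Both routes are valid; the paper's is shorter because it leverages an ingredient already available in the text (conformal covariance plus $\mathcal{Y}_{g}>0$), while your conjugation argument is the more intrinsic one and would survive in settings where a convenient conformal normalization of $R$ is not at hand.
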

\begin{proof}
Since $\mathcal{Y}_g>0$ and since the Lichnerowicz equation is conformally covariant as shown in Lemma \ref{lemma. maxwell2}, we may assume without loss of generality that $R>0$. 

We will prove the proposition by the implicit function theorem. In fact, we define
$$
\aligned
F:(L^{\infty} \setminus \{0\})\times [0,1]\times W_{+}^{2,p}&\longrightarrow L^{2p}
\\
(w,t,\theta)\qquad &\longmapsto \frac{4(n-1)}{n-2}\Delta \theta+R\theta+\frac{n-1}{n}t\tau^{2}\theta^{N-1}-w^{2}\theta^{-N-1}.
\endaligned
$$
It is clear that $F$ is continuous and $F(w,t,\mathcal{L}(w,t))=0$ for all $(w,t)\in (L^{\infty} \setminus \{0\})\times [0,1]$.
A standard computation shows that the Fr\'{e}chet derivative of $F$ w.r.t. $\theta$ is given by
$$F_{\theta}(w,t)(u)=\frac{4(n-1)}{n-2}\Delta u+Ru+\frac{(N-1)(n-1)}{n}t\tau^{2}\theta^{N-2}u+(N+1)w^{2}\theta^{-N-2}u.
$$
We first note that $F_{\theta}\in C\big((L^{\infty} \setminus \{0\})\times [0,1], L(W^{2,p},L^p)\big)$, where $L(W^{2,p}, L^p)$ denotes the Banach space of all linear continuous maps from $W^{2,p}$ into $L^p$.
Now, given $(w_{0},t_{0})\in (L^{\infty} \setminus \{0\})\times [0,1]$, setting $\theta_{0}=\mathcal{L}(w_{0},t_{0})$, we have
$$
F_{\theta_{0}}(w_{0},t_{0})(u)=\frac{4(n-1)}{n-2}\Delta u+\left(R+\frac{(N-1)(n-1)}{n}t_{0}\tau^{2}\theta_{0}^{N-2}+(N+1)w_{0}^{2}\theta_{0}^{-N-2}\right)u.
$$
Since 
$$R+\frac{(N-1)(n-1)}{n}t_{0}\tau^{2}\theta_{0}^{N-2}+(N+1)w_{0}^{2}\theta_{0}^{-N-2}\geq \min R>0,$$
we conclude that $F_{\theta_{0}}(w_{0},t_{0}):~W^{2,p}\to L^p$ is an isomorphism. Therefore, the implicit function theorem implies that $\mathcal{L}$ is a $C^{1}$ function in a
neighborhood of $(w_{0},t_{0})$, which completes the proof.
\end{proof}
%%%%%%%%%%%%%%%%%%%%%%%%%%%%%%%%%%%%%%%%%%%%%%%%%%%%%%%%%%%%%%%%%%%%%%%%%%%%%%%%%%%%%%%%%%%%%%%%%%%%%%%%%%%%%%%%%%%%%%%%%%%%%%%%%%%%%%%%%%%%%%%%%%%%%%%%%%%%%%%%%%%%%%%%%%%%%%%%%%%%%%%%%%%%%
\section{The proof of Theorem \ref{theorem.main}}
This section is devoted to the proof of Theorem \ref{theorem.main}. As we will see in the proof, one of the two solutions in our nonuniqueness result is indeed a ``small TT-tensor" solution shown in \cite{HNT2, MaxwellNonCMC, GicquaudNgo, Nguyen}. So for the convenience of the reader, we begin by recalling what this solution means.
\begin{theorem}[see \cite{Nguyen}, Theorem 4.8 and Remark 4.9]\label{theorem.farCMC}
Let $g\in W^{2,p}$ with $p > n$ be a Yamabe-positive metric on a smooth compact $n-$manifold. Assume that $g$ has no conformal Killing vector field, $\sigma \in W^{1,p}\setminus \{0\}$ and $\tau\in W^{1,p}$. There exist $\epsilon=\epsilon(g)>0$ and a constant $c_1=c_1(g,\sigma)>0$ s.t. as long as
$$
\|d\tau\|_{L^p}^{N+2}\|\sigma\|_{L^2}^{N-2}\le \epsilon,
$$
the system \eqref{CE} admits a solution $(\varphi,\,W)$ satisfying
$$
\|\varphi\|_{L^\infty}\le c_1. 
$$
\end{theorem}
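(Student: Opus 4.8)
The plan is to build $(\varphi,W)$ by a fixed-point argument for the usual decoupling of \eqref{CE}, using the tools of Sections~2 and~3. Since $\mathcal{Y}_g>0$ and \eqref{CE} is conformally covariant (Lemma~\ref{lemma. maxwell2} for the Lichnerowicz part, together with the conformal behaviour of the vector equation), one may first replace $g$ by a conformal representative with $R>0$, which alters $\sigma,\epsilon,c_1$ only by factors under control of $g$. Because $g$ has no conformal Killing vector field, $-\tfrac12 L^*L$ is an isomorphism from $W^{2,p}$ onto $L^p$ on $1$-forms, so for every $\varphi$ in the order interval $K:=\{\psi\in C^0(M):0\le\psi\le c_1\}$ the vector equation \eqref{eqVector} has a unique solution $W_\varphi\in W^{2,p}$, and (since $p>n$, so $W^{2,p}\hookrightarrow C^1$)
$$
\|W_\varphi\|_{W^{2,p}}\le C(g)\,\|\varphi\|_{L^\infty}^{N}\|d\tau\|_{L^p},
\qquad
\|LW_\varphi\|_{L^\infty}+\|LW_\varphi\|_{L^2}\le C(g)\,\|\varphi\|_{L^\infty}^{N}\|d\tau\|_{L^p};
$$
moreover $\varphi\mapsto W_\varphi$ is continuous from $C^0$ into $W^{2,p}$.

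Next I would set $w_\varphi:=\sigma+LW_\varphi\in W^{1,p}\hookrightarrow L^{2p}$. On $K$, once $\epsilon$ is small, $\|LW_\varphi\|_{L^2}<\|\sigma\|_{L^2}$, so $w_\varphi\ne 0$, and Case~1 of Theorem~\ref{theorem. maxwell1} lets us define $\Phi(\varphi):=\mathcal{L}(w_\varphi,1)\in W^{2,p}_+$, the unique positive solution of \eqref{Lichnerowicz} with source $w=w_\varphi$ and the given $\tau$. By Proposition~\ref{proposition L continuous} together with continuity of $\varphi\mapsto w_\varphi$, the map $\Phi\colon K\to W^{2,p}_+$ is continuous; since $w_\varphi$ stays in a bounded subset of $W^{1,p}$ bounded away from $0$, elliptic regularity bounds $\Phi(K)$ in $W^{2,p}$, so $\Phi(K)$ is precompact in $C^0$. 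The decisive ingredient is the a priori $L^\infty$-bound for \eqref{Lichnerowicz} in the Yamabe-positive case: discarding the favourably signed $\tau^2$-term and comparing with a global supersolution built from the conformal Yamabe metric gives, with a constant $C_1=C_1(g)$ reflecting the scaling $\varphi\mapsto\lambda\varphi$, $w^2\mapsto\lambda^{N+2}w^2$,
$$
\|\Phi(\varphi)\|_{L^\infty}\;\le\;C_1\,\|w_\varphi\|^{2/(N+2)}
\;\le\;C_1\Big(\|\sigma\|_{L^2}+C_2(g)\,\|\varphi\|_{L^\infty}^{N}\|d\tau\|_{L^p}\Big)^{2/(N+2)}.
$$

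The remainder is bookkeeping. Put $c_1:=2C_1\|\sigma\|_{L^2}^{2/(N+2)}$, so that $c_1=c_1(g,\sigma)$. For $\varphi\in K$ the right-hand side above is $\le c_1$ as soon as $C_2 c_1^{N}\|d\tau\|_{L^p}\lesssim\|\sigma\|_{L^2}$; inserting $c_1=2C_1\|\sigma\|_{L^2}^{2/(N+2)}$ and using $\tfrac{2N}{N+2}-1=\tfrac{N-2}{N+2}$ turns this into $\|d\tau\|_{L^p}\,\|\sigma\|_{L^2}^{(N-2)/(N+2)}\le\epsilon_0(g)$, i.e. into $\|d\tau\|_{L^p}^{N+2}\|\sigma\|_{L^2}^{N-2}\le\epsilon$ with $\epsilon:=\epsilon_0(g)^{N+2}=\epsilon(g)$ (one shrinks $\epsilon$, still depending only on $g$, to also secure $\|LW_\varphi\|_{L^2}<\|\sigma\|_{L^2}$ on $K$). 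Under this hypothesis $\Phi$ maps the nonempty closed convex set $K$ into itself — $\Phi(\varphi)$ is positive, hence $\ge 0$, and $\le c_1$ by the bound — is continuous, and has precompact image, so Corollary~\ref{Fixed point theorem} provides a fixed point $\varphi\in K$. Then $\varphi\in W^{2,p}_+$ with $\|\varphi\|_{L^\infty}\le c_1$, and $(\varphi,W_\varphi)$ solves \eqref{CE}.

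I expect the main obstacle to be the a priori $L^\infty$-estimate for the Lichnerowicz equation with the sharp exponent $2/(N+2)$ and with a source norm dominated by $\|\sigma\|_{L^2}$, carrying a constant depending on $g$ alone: this is precisely where positivity of $\mathcal{Y}_g$ enters, through the construction of the global supersolution, and the independence of $C_1$ from $\tau$ and $\sigma$ is what makes $\epsilon$ a function of $g$ only. A lesser but real point is to keep $w_\varphi$ bounded away from $0$ on all of $K$, so that $\Phi$ is well-defined and Case~1 of Theorem~\ref{theorem. maxwell1} applies; this costs only a further $g$-dependent shrinking of $\epsilon$.
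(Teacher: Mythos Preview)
The paper does not prove this theorem; it is quoted from \cite{Nguyen} (Theorem~4.8 and Remark~4.9) and used as a black box, so there is no proof in the paper to compare against. Your overall architecture --- solve the vector equation for $W_\varphi$, feed $w_\varphi=\sigma+LW_\varphi$ into the Lichnerowicz solution map, and close a fixed-point loop on a $C^0$-ball via a global supersolution --- is exactly the standard route used in \cite{MaxwellNonCMC,GicquaudNgo,Nguyen}, and the compactness/continuity parts are fine.

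The gap is the a~priori estimate you invoke. An inequality of the shape
\[
\|\Phi(\varphi)\|_{L^\infty}\le C_1(g)\,\|w_\varphi\|_{L^2}^{2/(N+2)}
\]
is false in general: at the maximum point of $\Phi(\varphi)$ the equation forces $R\,\Phi(\varphi)^{N+2}\le w_\varphi^2$, so $\|\Phi(\varphi)\|_{L^\infty}$ is controlled by $\|w_\varphi\|_{L^\infty}$, not $\|w_\varphi\|_{L^2}$; a concentrating sequence $w_\epsilon$ with fixed $L^2$ norm makes $\|\Phi\|_{L^\infty}\to\infty$. With the correct $L^\infty$ (or $L^{2p}$) norm on $w_\varphi$ your bookkeeping still yields a $c_1=c_1(g,\sigma)$ and a smallness condition of the form $\|d\tau\|_{L^p}^{N+2}\|\sigma\|_{*}^{N-2}\le\epsilon(g)$, but with $\|\sigma\|_*$ an $L^\infty$-type norm, not $\|\sigma\|_{L^2}$. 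To recover the $L^2$ exponent actually stated you need the finer global supersolution from \cite{GicquaudNgo,Nguyen}: one solves $\frac{4(n-1)}{n-2}\Delta\eta+R\eta=|w_\varphi|^2$ and uses both $\|\eta\|_{L^\infty}\le C(g)\|w_\varphi\|_{L^{2p}}^2$ \emph{and} the Green's-function lower bound $\min\eta\ge c(g)\|w_\varphi\|_{L^2}^2$ in building the barrier. The $L^2$ norm of $\sigma$ then enters through this lower bound (and through keeping $w_\varphi\ne 0$), while higher norms of $\sigma$ are absorbed into $c_1(g,\sigma)$. Once you make that construction explicit, the rest of your argument goes through.
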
 

We now consider our main result. Because the arguments for the two assertions in Theorem \ref{theorem.main} are broadly similar, in which the second one is more complicated, we will solve it first and then sketch the proof of the first one.
\begin{theorem}\label{theorem-nonuniqueness 1}
Let $g\in W^{2,p}$ with $p > n$ be a Yamabe-positive metric on a smooth compact $n-$manifold. Assume that $g$ has no conformal Killing vector field, $\sigma \in W^{1,p}\setminus \{0\}$ and $\tau\in W^{1,p}$ does not change sign. Assume furthermore that $\tau$ satisfies 
\begin{equation}\label{key condition}
\bigg|L\bigg(\frac{d\tau}{\tau}\bigg)\bigg|\le c\bigg|\frac{d\tau}{\tau}\bigg|^2
\end{equation}
for some constant $c>0$. If $|\tau| <1$, then the system \eqref{CE} associated with $(g,\tau^a,\sigma)$  has at least two solutions for all $a>0$ large enough only depending on $(g,\tau,\sigma,c)$.
\end{theorem}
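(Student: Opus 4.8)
The plan is to produce the two solutions by two different devices and then to certify that they are distinct by a size comparison. The first one is the small TT-tensor solution of Theorem~\ref{theorem.farCMC}. Indeed, since $p>n$, $\tau$ is continuous on the compact manifold $M$, so $\mu:=\max_M|\tau|<1$, and hence
$$
\|d(\tau^a)\|_{L^p}=a\,\big\|\tau^{a-1}\,d\tau\big\|_{L^p}\le a\,\mu^{a-1}\,\|d\tau\|_{L^p}\;\xrightarrow[a\to\infty]{}\;0 .
$$
Thus for all $a$ large enough (in terms of $g,\tau,\sigma$) the smallness hypothesis of Theorem~\ref{theorem.farCMC} is satisfied by the data $(g,\tau^a,\sigma)$, yielding a solution $(\varphi_1,W_1)$ of \eqref{CE} with $\|\varphi_1\|_{L^\infty}\le c_1=c_1(g,\sigma)$, a bound \emph{independent of $a$}. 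Because $W_1$ solves the vector equation with source $\tfrac{n-1}{n}\varphi_1^{N}d(\tau^a)$, one also has $\|W_1\|_{W^{2,p}}\le C\,c_1^{N}\|d(\tau^a)\|_{L^p}\to0$, so $\varphi_1$ is $L^\infty$-close to the unique solution $\theta_0:=\mathcal{L}(|\sigma|,1)$ of the Lichnerowicz equation with $w=|\sigma|$.

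For the second solution I would recast \eqref{CE} as a fixed point problem on $X=C^{0}(M)$. Given $\varphi\in X$, let $W[\varphi]\in W^{2,p}$ be the unique solution (uniqueness because $(M,g)$ has no conformal Killing field) of $-\tfrac12 L^{*}LW=\tfrac{n-1}{n}|\varphi|^{N}d(\tau^a)$, and put $w[\varphi]:=|\sigma+LW[\varphi]|$; note $w[\varphi]\not\equiv0$, since otherwise $LW[\varphi]=-\sigma$ would be divergence-free, forcing $L^{*}LW[\varphi]=0$, hence $W[\varphi]=0$ and $\sigma=0$. By Proposition~\ref{proposition L continuous}, elliptic regularity and the compact embedding $W^{2,p}\hookrightarrow C^{0}$, the map $\Phi(\varphi):=\mathcal{L}(w[\varphi],1)$ is a continuous compact self-map of $X$ whose fixed points are exactly the $\varphi$-components of solutions of \eqref{CE}. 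I would then set $T(t,\varphi):=\Phi(\varphi)$ — composed with a (continuous) modification that adds to $w[\varphi]^{2}$ a bounded nonnegative term supported on $\{\varphi<2c_1\}$, engineered so that every fixed point of $T(1,\cdot)$ has $\min_M\varphi>\|\varphi_1\|_{L^\infty}$ while the modification stays inactive there (so that such a fixed point genuinely solves \eqref{CE} and differs from $\varphi_1$) — and I would take the $T$-association $\{F_1\}$, $F_1(t,\varphi):=\|\varphi\|_{L^\infty}-A$, with $A$ a large constant. This is a $T$-association: $F_1(0,0)=-A<0$, and if $\|\varphi\|_{L^\infty}\le A$ then $\|w[\varphi]\|_{L^\infty}$, hence (a constant supersolution, after reducing to $R>0$ by Lemma~\ref{lemma. maxwell2}) also $\|\mathcal{L}(w[\varphi],1)\|_{L^\infty}$, is bounded in terms of $(g,\tau^a,\sigma,A)$.

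Now apply Theorem~\ref{theorem generalization}: either $T(1,\cdot)$ has a fixed point — the sought second solution — or there is $(t,\varphi)$ with $\varphi=tT(t,\varphi)$ and $\|\varphi\|_{L^\infty}=A$. In the second case $t>0$ (else $\varphi=0$ and $\|\varphi\|_{L^\infty}=0\ne A$), and, using the linearity of the vector equation ($W[t\psi]=t^{N}W[\psi]$), $\psi:=\varphi/t$ solves the $t$-dampened Lichnerowicz equation
$$
\frac{4(n-1)}{n-2}\Delta\psi+R\psi+\frac{n-1}{n}(\tau^a)^{2}\psi^{N-1}=\frac{\big|\sigma+t^{N}LW[\psi]\big|^{2}+r_t}{\psi^{N+1}},\qquad -\frac12 L^{*}LW[\psi]=\frac{n-1}{n}\psi^{N}d(\tau^a),
$$
where $0\le r_t\le C(c_1)$ is the bounded modification term, with $\|\psi\|_{L^\infty}=A/t\ge A$. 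The plan is to prove an a priori bound $\|\psi\|_{L^\infty}\le\Lambda(g,\tau^a,\sigma)$, uniform in $t\in[0,1]$; picking $A>\Lambda$ then kills this alternative and forces the first one. This uniform estimate is the crux: if it failed, a blow-up and rescaling analysis in the spirit of \cite{DahlGicquaudHumbert} would produce a nonzero solution $(\bar\varphi,\bar W)$ of the associated limit equation, with $\bar W$ proportional to $\tfrac{d(\tau^a)}{\tau^a}=a\,\tfrac{d\tau}{\tau}$; but then
$$
\Big|L\Big(\tfrac{d(\tau^a)}{\tau^a}\Big)\Big|=a\,\Big|L\Big(\tfrac{d\tau}{\tau}\Big)\Big|\le a c\,\Big|\tfrac{d\tau}{\tau}\Big|^{2}=\frac{c}{a}\,\Big|\tfrac{d(\tau^a)}{\tau^a}\Big|^{2},
$$
so the constant controlling \eqref{key condition} for $\tau^a$ is $c/a$, which for $a$ large sits below the dimensional threshold beyond which the limit equation admits only the zero solution (consistent with the bound $a>\tfrac{c}{2}\sqrt{n/(n-1)}$ in the first assertion of Theorem~\ref{theorem.main}) — a contradiction. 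Hence $T(1,\cdot)$ has a fixed point, and \eqref{CE} for $(g,\tau^a,\sigma)$ has a solution $\varphi_2\ne\varphi_1$.

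The hard part is precisely this uniform a priori $L^\infty$ bound for the $t$-dampened family and its reduction to the nonexistence of a nonzero solution of the limit equation at effective coupling $c/a$ — the mechanism through which the hypothesis on $\tau$ and the largeness of $a$ enter. A secondary technical point is the construction of the modification of $\Phi$ that rules $\varphi_1$ out of the fixed-point set of $T(1,\cdot)$: using the monotonicity of $\mathcal{L}$ in $w$ (Lemma~\ref{lemma. maxpriw}) and an estimate at the minimum point of $\varphi$, the cutoff must be strong enough to force $\min_M\varphi>\|\varphi_1\|_{L^\infty}$ at any fixed point yet (indeed) inactive there, so that the second fixed point truly solves \eqref{CE}; reconciling these two requirements is where the design requires care.
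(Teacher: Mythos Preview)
Your overall strategy --- pair the small TT-tensor solution with a second, large solution obtained via a fixed-point argument, and separate them by size --- matches the paper, and the a priori bound you sketch (blow-up to the limit equation, then kill it using that the effective constant for $\tau^a$ is $c/a$) is exactly Lemma~\ref{lemma - stability 1}. But the way you propose to extract the second solution from Theorem~\ref{theorem generalization} diverges from the paper and has a genuine gap.

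You take $T(t,\varphi)=\Phi(\varphi)$ independent of $t$ and a single $F_1(t,\varphi)=\|\varphi\|_{L^\infty}-A$; this collapses Theorem~\ref{theorem generalization} to Schaefer's theorem, and you want alternative~(i) (a fixed point of $T(1,\cdot)$) to \emph{be} the second solution. The obstruction is the ``modification'' you add to $w[\varphi]^2$: you need it to (a) exclude $\varphi_1$ from the fixed-point set, yet (b) be inactive at whatever fixed point you find, so that the latter solves the genuine system~\eqref{CE}. A bounded bump supported where $\|\varphi\|_{L^\infty}<2c_1$ cannot do both: raising its amplitude pushes fixed points upward, but there is no mechanism forcing them across the threshold where the bump vanishes --- one simply manufactures fixed points with $\|\varphi\|_{L^\infty}$ just below $2c_1$ and the bump still active, hence not solutions of~\eqref{CE}. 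You flag this as a ``secondary technical point''; it is in fact the crux, and I do not see how to close it within your setup.

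The paper resolves this by going the opposite way. It first extracts (Step~1) a \emph{critical element} $(\varphi_0,W_0,k_0)$ realising $\|\varphi_0\|_{L^\infty}=A(a_0)$ for the $(a_0,k)$-system~\eqref{k-CE}, and takes the modification to be $(2\max\{\|\varphi_0\|_{L^\infty},2\}-\|\varphi\|_{L^\infty})_+\,(\|\sigma+LW_0\|_{L^\infty}^2+k_0^2)$. Then any fixed point $\varphi_*$ of $T(1,\cdot)$ lies in some $\mathcal{A}_{a_0}(k)$, so $\|\varphi_*\|_{L^\infty}\le A(a_0)=\|\varphi_0\|_{L^\infty}$; but the modification then dominates $|\sigma+LW_0|^2+k_0^2$ pointwise, and Lemma~\ref{lemma. maxpriw} forces $\varphi_*>\varphi_0$ --- a contradiction. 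Hence $T(1,\cdot)$ has \emph{no} fixed point, and the large solution is read off from alternative~(ii). To make that alternative carry enough information the paper (a) lets $t$ control the exponent, so that $T(t,\varphi)$ involves mean curvature $\tau^{a_0/t}$, and (b) uses \emph{two} functions $F_{\kappa,1},F_{\kappa,2}$: the analysis of which $F_{\kappa,i}$ vanishes pins down $1/t=\kappa$ and $\|\varphi\|_{L^\infty}>2\max\{\|\varphi_0\|_{L^\infty},2\}$, so the modification is inactive and $(\psi,W)$ solves the genuine system for $\tau^{\kappa a_0}$ with $\|\psi\|_{L^\infty}\ge\kappa>c_1$. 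This is precisely where the half-continuity machinery (more than one $F_i$) earns its keep; your single-$F$ reduction discards that leverage.
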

The following lemma is the key to solving the theorem.
\begin{lemma}\label{lemma - stability 1}
Let $g\in W^{2,p}$ with $p > n$ be a Yamabe-positive metric on a smooth compact $n-$manifold. Assume that $g$ has no conformal Killing vector field, $\sigma \in W^{1,p}\setminus \{0\}$ and $\tau\in W^{1,p}$ does not change sign. For $a,\,k\ge 0$ let us denote by $\mathcal{A}_a(k)$ the set of all $(\varphi,W)$ satisfying the following $(a,k)-$conformal equations
\begin{equation}\label{k-CE}
 \aligned
 \frac{4(n-1)}{n-2}\Delta \varphi+R\varphi=&-\frac{n-1}{n}\tau^{2a}\varphi^{N-1}+\left(|\sigma+LW|^{2}+k^{2}\right)\varphi^{-N-1},\\
 -\frac{1}{2}L^{*}LW=&\frac{n-1}{n}\varphi^{N}d\tau^a.
\endaligned
 \end{equation}
If $\tau$ satisfies 
$$
\bigg|L\bigg(\frac{d\tau}{\tau}\bigg)\bigg|\le c\bigg|\frac{d\tau}{\tau}\bigg|^2
$$
 for some constant $c>0$,  as long as $\mathcal{A}_a(k)\ne \emptyset$, we then have that
\begin{equation}\label{def. A(a)}
A(a):=\sup_{k\ge 0}\bigg\{\sup_{(\varphi,W)\in \mathcal{A}_a(k)}\|\varphi\|_{L^\infty}\bigg\}<+\infty
\end{equation}
for all $a>\frac{c}{2}\sqrt{\frac{n}{n-1}}$.
\end{lemma}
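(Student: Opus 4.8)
The plan is to run a blow-up / limit-equation argument of the type used in \cite{DahlGicquaudHumbert} and \cite{Nguyen}. By the conformal covariance of the Lichnerowicz equation (Lemma~\ref{lemma. maxwell2}) we may assume $R>0$; we also work under the assumption that $\tau$ is non-constant, so that $X:=d\tau/\tau$ is not identically zero. Suppose, for contradiction, that $A(a)=+\infty$ for some $a>\tfrac{c}{2}\sqrt{\tfrac{n}{n-1}}$. Then there are $k_j\ge 0$ and $(\varphi_j,W_j)\in\mathcal{A}_a(k_j)$ with $M_j:=\|\varphi_j\|_{L^\infty}\to+\infty$, and I would rescale
$$
\gamma_j:=\frac{\varphi_j}{M_j},\qquad V_j:=\frac{W_j}{M_j^{N}},\qquad \beta_j:=\frac{k_j^{2}}{M_j^{2N}},
$$
so that $\|\gamma_j\|_{L^\infty}=1$ and \eqref{k-CE} becomes, after substitution and rearrangement,
$$
\frac{4(n-1)}{n-2}M_j^{2-N}\Delta\gamma_j+RM_j^{2-N}\gamma_j+\frac{n-1}{n}\tau^{2a}\gamma_j^{N-1}=\Big(\big|\tfrac{\sigma}{M_j^{N}}+LV_j\big|^{2}+\beta_j\Big)\gamma_j^{-N-1},
$$
together with $-\tfrac12 L^{*}LV_j=\tfrac{n-1}{n}\gamma_j^{N}\,d\tau^{a}$.

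Next I would extract the a priori bounds needed to pass to a limit. Since $g$ has no conformal Killing field, $L^{*}L:W^{2,p}\to L^{p}$ is an isomorphism, so the vector equation gives $\|V_j\|_{W^{2,p}}\le C\|\gamma_j^{N}\,d\tau^{a}\|_{L^{p}}\le C\|d\tau^{a}\|_{L^{p}}$; hence $V_j$ is bounded in $W^{2,p}$ and, as $p>n$, subconverges in $C^{1}$ to some $V_\infty$. Evaluating the Lichnerowicz equation at a minimum point of $\varphi_j$ gives $k_j^{2}\le CM_j^{2N}$, so $\beta_j$ is bounded and, up to a subsequence, $\beta_j\to\beta\ge 0$. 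Because the coefficient $M_j^{2-N}$ of the Laplacian tends to $0$, a singular-perturbation/barrier argument — comparing $\gamma_j$ with $(1\pm\delta)\gamma_\infty$ via Proposition~\ref{lemma method sub-super} and Lemma~\ref{lemma. maxpriw}, using the monotonicity of the reaction term in $\gamma_j$ — should yield $\gamma_j\to\gamma_\infty$ in $C^{0}(M)$ with $\|\gamma_\infty\|_{L^\infty}=1$, the limit pair satisfying
$$
\frac{n-1}{n}\tau^{2a}\gamma_\infty^{2N}=|LV_\infty|^{2}+\beta,\qquad -\tfrac12 L^{*}LV_\infty=\tfrac{n-1}{n}\gamma_\infty^{N}\,d\tau^{a}.
$$

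The heart of the argument is then a short integral identity. From the first relation, $\gamma_\infty^{N}\tau^{a}=\sqrt{\tfrac{n}{n-1}}\,\sqrt{|LV_\infty|^{2}+\beta}$; pairing the limit vector equation in $L^{2}$ with $X^{\sharp}$, using that $L^{*}$ is the formal adjoint of $L$ and that $\langle d\tau^{a},X\rangle=a\tau^{a}|X|^{2}$,
$$
-\frac12\int_M\langle LV_\infty,LX\rangle\,dv=a\,\frac{n-1}{n}\int_M\gamma_\infty^{N}\tau^{a}|X|^{2}\,dv=a\sqrt{\frac{n-1}{n}}\int_M\sqrt{|LV_\infty|^{2}+\beta}\;|X|^{2}\,dv.
$$
By Cauchy--Schwarz and the hypothesis $|LX|\le c|X|^{2}$, the left-hand side is at most $\tfrac{c}{2}\int_M|LV_\infty|\,|X|^{2}\le\tfrac{c}{2}\int_M\sqrt{|LV_\infty|^{2}+\beta}\,|X|^{2}$, whence
$$
\Big(a\sqrt{\tfrac{n-1}{n}}-\tfrac{c}{2}\Big)\int_M\sqrt{|LV_\infty|^{2}+\beta}\;|X|^{2}\,dv\le 0 .
$$
Since $a>\tfrac{c}{2}\sqrt{\tfrac{n}{n-1}}$ the prefactor is positive, so $\sqrt{|LV_\infty|^{2}+\beta}\,|X|^{2}\equiv 0$ on $M$. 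As $X\not\equiv 0$, this forces $\beta=0$ and $LV_\infty\equiv 0$ on $\{X\ne 0\}$; since $d\tau^{a}\equiv 0$ on $\{X=0\}$, the limit vector equation then gives $L^{*}LV_\infty\equiv 0$ on all of $M$, hence $LV_\infty\equiv 0$ everywhere, and the first limit relation forces $\gamma_\infty\equiv 0$ — contradicting $\|\gamma_\infty\|_{L^\infty}=1$. This contradiction proves the lemma.

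The step I expect to be the real work is the compactness in the rescaled problem: the bound $\beta_j\le C$ (controlling the $k^{2}$-term against the blow-up rate $M_j^{N}$), and, above all, upgrading the convergence of $\gamma_j$ to \emph{uniform} convergence to a limit satisfying the algebraic relation $\tfrac{n-1}{n}\tau^{2a}\gamma_\infty^{2N}=|LV_\infty|^{2}+\beta$, i.e. controlling the singular perturbation $M_j^{2-N}\Delta\gamma_j\to 0$ without any a priori second-order estimate on $\gamma_j$. Once the limit system is in hand, the integral identity and the use of the key inequality are immediate, and the threshold $a>\tfrac{c}{2}\sqrt{\tfrac{n}{n-1}}$ enters precisely at the last step.
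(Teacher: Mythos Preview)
Your proposal is correct and follows essentially the same blow-up/limit-equation strategy as the paper: rescale, pass to the limit (the paper likewise defers the singular-perturbation convergence to \cite{DahlGicquaudHumbert,Nguyen}), and derive a contradiction by pairing the limit vector equation with $d\tau/\tau$ and invoking the hypothesis $|L(d\tau/\tau)|\le c|d\tau/\tau|^{2}$. The only cosmetic difference is that the paper rescales by $\gamma_i=\max\{\|\varphi_i\|_{L^\infty},\,k_i^{1/N}\}$, which makes $\widehat{k}_i\le 1$ automatic and ensures $(W_\infty,k_\infty)\ne(0,0)$ in the limit, whereas you rescale by $\|\varphi_j\|_{L^\infty}$ and handle the boundedness of $\beta_j$ separately via the minimum principle.
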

\begin{proof}
We argue by contradiction. Assume that the lemma is not true. Then there exists $(\varphi_i,W_i,k_i)$ with $\|\varphi_i\|_{L^{\infty}}\longrightarrow +\infty$ s.t. 
\begin{equation}\label{k-CEi}
 \aligned
 \frac{4(n-1)}{n-2}\Delta \varphi_i+R\varphi_i=&-\frac{n-1}{n}\tau^{2a}\varphi_i^{N-1}+\left(|\sigma+LW_i|^{2}+k_i^{2}\right)\varphi_i^{-N-1},\\
 -\frac{1}{2}L^{*}LW_i=&\frac{n-1}{n}\varphi_i^{N}d\tau^a.
\endaligned
 \end{equation}
 Setting 
 $$
 \gamma_i:=\max\bigg\{\|\varphi_i\|_{L^\infty},\,k_i^{1/N}\bigg\},
 $$
 we rescale $\varphi_i,\,W_i,\,\sigma$ and $k_i$ as follows:
 $$
 \widehat{\varphi}_i:=\gamma_i^{-1}\varphi_i,\qquad \widehat{W}_i:=\gamma_i^{-N}W_i,\qquad \widehat{\sigma}:=\gamma_i^{-N}\sigma, \qquad \widehat{k}_i:=\gamma_i^{-N}k_i.
 $$
The system \eqref{k-CEi} can be rewritten as
$$
\aligned
\gamma_i^{\frac{1}{N-2}}\bigg(\frac{4(n-1)}{n-2}\Delta \widehat{\varphi}_i+R\widehat{\varphi}_i\bigg)=&-\frac{n-1}{n}\tau^{2a}\widehat{\varphi}_i^{N-1}+\left(|\widehat{\sigma}+L\widehat{W}_i|^{2}+\widehat{k}_i^{2}\right)\widehat{\varphi}_i^{-N-1},
\\
-\frac{1}{2}L^{*}L\widehat{W}_i=&\frac{n-1}{n}\widehat{\varphi}_i^{N}d\tau^a.
\endaligned
$$
Letting $i\to +\infty$,  in analogy with the scaling (blow-up) arguments in \cite[Theorem 1.1]{DahlGicquaudHumbert} or \cite[Theorem 3.3]{Nguyen}  we have that (after passing to a subsequence)
$$
(\widehat{\varphi}_i^N,\,\widehat{W}_i,\,\widehat{k}_i)\to \bigg(\sqrt{\frac{n-1}{n}}\frac{|LW_\infty|+k_\infty}{\tau^a},\,W_\infty,\,k_\infty\bigg) \quad \text{in} \quad L^\infty
$$
for some $(W_\infty,\,k_\infty)\in \big(W^{2,p}\times [0,1]\big)\setminus \{(0,\,0)\}$. Hence we obtain by the vector equation that
$$
\aligned
-\frac{1}{2}L^*LW_\infty=&\sqrt{\frac{n-1}{n}}\big(|LW_\infty|+k_\infty\big)\frac{d\tau^a}{\tau^a}
\\
=&a\sqrt{\frac{n-1}{n}}\big(|LW_\infty|+k_\infty\big)\frac{d\tau}{\tau}.
\endaligned
$$
Now take the scalar product of this equation with $d\tau/\tau$ and integrate. It follows that 
\begin{equation}\label{ineq:3.1}
\aligned
a\sqrt{\frac{n-1}{n}}\int_M\big(|LW_\infty|+k_\infty\big)\bigg(\frac{d\tau}{\tau}\bigg)^2dv=&-\frac{1}{2}\int_M \bigg\langle LW_\infty,\,L\bigg(\frac{d\tau}{\tau}\bigg)\bigg\rangle dv \\
\leq& \frac{1}{2}\int_M|LW_\infty|\bigg|L\bigg(\frac{d\tau}{\tau}\bigg)\bigg|dv\\
\leq& \frac{c}{2}\int_M|LW_\infty|\bigg(\frac{d\tau}{\tau}\bigg)^2dv.
\endaligned
\end{equation}
Since
$$
a> \frac{c}{2}\sqrt{\frac{n}{n-1}},
$$
 we get from \eqref{ineq:3.1} that $|LW_\infty|+k_\infty=0$, which is a contradiction. The proof is completed.
\end{proof}
\begin{proof}[Proof of Theorem \ref{theorem-nonuniqueness 1}]
  Since $\|\tau\|_{L^\infty}<1$, we have that
  $$
  \|d\tau^a\|_{L^p}\le a\|\tau\|_{L^\infty}^{a-1}\|d\tau\|_{L^p}\to 0\quad \text{as}\quad a\to+\infty.
  $$ 
  It follows by Theorem \ref{theorem.farCMC} that there exists a sufficiently large constant $a_0=a_0(g,\tau,\sigma)>0$  s.t. for all $a\ge a_0$ the system \eqref{CE} associated with $(g,\tau^a,\sigma)$ has a solution $(\widetilde{\varphi}_a,\,\widetilde{W}_a)$ satisfying
$$
\|\widetilde{\varphi}_a\|_{L^{\infty}}\leq c_1
$$
for some $c_1=c_1(g,\sigma)>0$. Therefore, to prove the theorem, we only need to show that for all sufficiently small $t>0$ the system \eqref{CE} associated with $(g,\tau^{a_0/t},\sigma)$ admits a solution $(\varphi_t,\,W_t)$ satisfying $\|\varphi_t\|_{L^{\infty}}>c_1$. The proof will be divided into three steps.

\

\textit{Step 1. Critical elements.} Let $(\varphi_i,\,W_i,\,k_i)\in W^{2,p}_+\times W^{2,p}\times [0,+\infty)$ satisfy $(\varphi_i,\,W_i)\in \mathcal{A}_{a_0}(k_i)$ and
\begin{equation}\label{eq3:1}
\|\varphi_i\|_{L^\infty}\to A(a_0). 
\end{equation}
Here $\mathcal{A}_{a_0}(k_i)$ and $A(a_0)$ are defined in Lemma \ref{lemma - stability 1} with respect to $(a,k)=(a_0,\,k_i)$. Note that
$$
\int_{M}R\varphi_i \,d\nu + {n-1 \over n} \int_M\tau^{a_0}\varphi_i^{N-1} \,d\nu = \int_{M} \big(|\sigma + LW_i|^2 + k_i^2\big)\varphi_i^{-N-1}\,d\nu,
$$
then
$$
\|\varphi_i\|_{L^\infty}^{N+2}\int_{M}|R| \,d\nu + \|\varphi_i\|_{L^\infty}^{2N}\bigg({n-1 \over n} \int_M\tau^{a_0} \,d\nu \bigg)\ge \int_{M} \big(|\sigma + LW_i|^2 + k_i^2\big)\,d\nu.
$$
Since $A(a_0)<+\infty$, it follows that $k_i$ is bounded and hence (after passing to a subsequence) 
\bel{k converge}
k_i\to k_0.
\ee
 On the other hand, we have by the vector equation that
$$
\aligned
\|W_i\|_{W^{2,p}}\le& c_2(g)\|\varphi_i^Nd\tau^{a_0}\|_{L^p}
\\
\le& c_2\|\varphi_i\|^N_{L^\infty}\|d\tau^{a_0}\|_{L^p}
\\
\le& c_3(c_2,\tau)\big(A(a_0)\big)^N\qquad \text{(by \eqref{eq3:1})}.
\endaligned
$$
Thanks to the Sobolev embedding theorem, this gives us that (after passing to a subsequence) $W_i$ converges to $W_0$ in $C^1$. Combined with \eqref{k converge}, since
$$
\int_M |\sigma + LW_{t,\varphi}|^2 \,d\nu = \int_M |\sigma|^2 \,d\nu + \int_M |LW_{t,\varphi}|^2 \,d\nu \ge \int_M |\sigma|^2 \,d\nu>0, 
$$
we obtain by Proposition \ref{proposition L continuous} that 
$$
\varphi_i\to \varphi_0 \quad \text{in $W^{2,p}$},
$$ where $\varphi_0$ is a unique solution to the Lichnerowicz equation
\begin{equation}\label{def varphi_0}
\frac{4(n-1)}{n-2}\Delta \varphi_0+R\varphi_0=-\frac{n-1}{n}\tau^{2a_0}\varphi^{N-1}_0+\left(|\sigma+LW_0|^{2}+k_0^{2}\right)\varphi^{-N-1}_0.
\end{equation}
In particular,
$$
(\varphi_0,\,W_0)\in \mathcal{A}_{a_0}(k_0)\quad \text{and}\quad\|\varphi_0\|_{L^\infty}=\lim\|\varphi_i\|_{L^\infty}=A(a_0).
$$
The triple $(\varphi_0,\,W_0,\,k_0)$ plays an important role in our arguments as we will see in the following steps.

\

\textit{Step 2. Constructing a continuous and compact operator.} We define an operator $T:[0,1]\times C_+^0 \rightarrow C_+^0$ as follows. For each $(t,\varphi)\in (0,1]\times C_+^0$, there exists a unique $W_{t,\varphi}\in W^{2,p}$ s.t.
$$
-\frac{1}{2}L^*LW_{t,\varphi}=\frac{n-1}{n}t^{-N}\varphi^Nd\tau^{a_0/t},
$$ 
and then there is a unique $\psi_{t,\varphi}\in W_{+}^{2,p}$ s.t.
$$
\aligned
&\frac{4(n-1)}{n-2}\Delta \psi_{t,\varphi} +R\psi_{t,\varphi}+\frac{n-1}{n}\tau^{2a_0/t}\psi_{t,\varphi}^{N-1}\\
=&\bigg[|\sigma+LW_{t,\varphi}|^2+\bigg(2\max\big\{\|\varphi_0\|_{L^\infty},\,2\big\}-\|\varphi\|_{L^\infty}\bigg)_+\bigg(\|\sigma+LW_0\|^2_{L^\infty}+k_0^2\bigg)\bigg]\psi_{t,\varphi}^{-N-1}.
\endaligned
$$
Here and subsequently, for any function $f$ we write $f_+:=\max\{f,\,0\}$.
We define
$$
	T(t,\varphi):=\left\{\begin{array}{ll}
	\psi_{t,\varphi}&\textrm{if $t\ne 0$,}\\
	\psi_{0,\varphi}&\textrm{if $t=0$,}
	\end{array} \right.
$$
where $\psi_{0,\varphi}$ is a unique solution of the equation
$$
\frac{4(n-1)}{n-2}\Delta \psi +R\psi
=\bigg[|\sigma|^2+\bigg(2\max\big\{\|\varphi_0\|_{L^\infty},\,2\big\}-\|\varphi\|_{L^\infty}\bigg)_+\bigg(\|\sigma+LW_0\|^2_{L^\infty}+k_0^2\bigg)\bigg]\psi^{-N-1}.
$$
Note that $\psi_{0,\varphi}$ is well-defined by Case 1 in Theorem \ref{theorem. maxwell1} since we assumed $\mathcal{Y}_g>0$. 

It is clear that $T(t,\varphi)>0$ for all $(t,\varphi)\in [0,1]\times C^0_{+}$. Analysis similar to that in \cite{DahlGicquaudHumbert}, \cite{MaxwellNonCMC} shows that $T$ is continuous compact in $[t_0,1]\times C_+^0$ with all $t_0>0$. Now for any sequence  $\{(t_i,\varphi_i)\}\subset [0,1]\times C_+^0$ satisfying $(t_i,\,\varphi_i)\to (0,\varphi_\infty)$, since $|\tau|<1$, we have
\begin{equation}\label{t=0 1}
\|\tau\|_{L^\infty}^{2a_0/t_i}\to 0
\end{equation}
and by the vector equation
\begin{equation}\label{t=0 2}
\aligned
\|W_{t_i,\varphi_i}\|_{W^{2,p}}\le &c_2t_i^{-N}\big\|\varphi_i^{N}d\tau^{a_0/t_i}\big\|_{L^p} 
\\
\le &a_0t_i^{-N-1}\big\|\tau\|_{L^\infty}^{\frac{a_0-t_i}{t_i}}\|\varphi_i^Nd\tau\big\|_{L^p}\to 0.
\endaligned
\end{equation}
Therefore, analysis similar to the proof of Proposition \ref{proposition L continuous} shows that 
$$ 
T(t_i,\varphi_i)\to \psi_{0,\varphi_\infty}~~\text{in $W^{2,p}$},
$$
where $\psi_{0,\varphi_\infty}$ is a unique solution to the equation
$$
\frac{4(n-1)}{n-2}\Delta \psi +R\psi
=\bigg[|\sigma|^2+\bigg(2\max\big\{\|\varphi_0\|_{L^\infty},\,2\big\}-\|\varphi_\infty\|_{L^\infty}\bigg)_+\bigg(\|\sigma+LW_0\|^2_{L^\infty}+k_0^2\bigg)\bigg]\psi^{-N-1}.
$$ 
Hence, $T(t_i,\,\varphi_i)\to T(0,\varphi_\infty)$ by the definition. In the case where $t_i\to 0$ and $\{\varphi_i\}$ is bounded, the facts \eqref{t=0 1}-\eqref{t=0 2} are still true. Therefore, after passing to subsequence $\|\varphi_i\|_{L^\infty}\to L$, we also have that 
$$ 
T(t_i,\varphi_i)\to \psi_{0,L}~~\text{in $W^{2,p}$},
$$
where $\psi_{0,L}$ is a unique solution of the equation
$$
\frac{4(n-1)}{n-2}\Delta \psi +R\psi
=\bigg[|\sigma|^2+\bigg(2\max\big\{\|\varphi_0\|_{L^\infty},\,2\big\}-L\bigg)_+\bigg(\|\sigma+LW_0\|^2_{L^\infty}+k_0^2\bigg)\bigg]\psi^{-N-1}.
$$
This means that as long as $t_i\to 0$ and $\{\varphi_i\}$ is bounded, $\{T(t_i,\,\varphi_i)\}$ has a convergent subsequence. Thus we can conclude that $T$ is continuous compact in $[0,1]\times C^0_+$.

\textit{Step 3. Using the half-continuity method.}
First we will show that $T(1,.)$ has no fixed point. We argue by contradiction. Assume that $\varphi_*$ is a fixed point of $T(1,.)$, that is 
\begin{equation}\label{contradiction-eq 2}
\aligned
\frac{4(n-1)}{n-2}\Delta \varphi_* +R\varphi_*=&-\frac{n-1}{n}\tau^{2a_0}\varphi_*^{N-1}
\\
&+\bigg[|\sigma+LW_*|^2+\bigg(2\max\big\{\|\varphi_0\|_{L^\infty},\,2\big\}-\|\varphi_*\|_{L^\infty}\bigg)_+\bigg(\|\sigma+LW_0\|^2_{L^\infty}+k_0^2\bigg)\bigg]\varphi_*^{-N-1},
\\
-\frac{1}{2}L^*LW_*=&\frac{n-1}{n}\varphi_*^Nd\tau^{a_0}.
\endaligned
\end{equation}
Note that 
\begin{equation}\label{contribution of added term}
\|\varphi_*\|_{L^{\infty}}\leq A(a_0)= \|\varphi_0\|_{L^\infty},
\end{equation} 
then
$$
\aligned
\bigg(2\max\big\{\|\varphi_0\|_{L^\infty},\,2\big\}-\|\varphi_*\|_{L^\infty}\bigg)_+\bigg(\|\sigma+LW_0\|^2_{L^\infty}+k_0^2\bigg)&\ge 2\bigg(\|\sigma+LW_0\|^2_{L^\infty}+k_0^2\bigg)
\\
&\ge {2 \over \text{Vol}_g(M)}\int_M|\sigma + LW_0|^2 d\nu
\\
&\ge {2 \over \text{Vol}_g(M)} \int_M|\sigma|^2 d\nu.
\endaligned
$$
Therefore, thanks to Lemma \ref{lemma. maxpriw}, we have by \eqref{def varphi_0} and \eqref{contradiction-eq 2} that
$$
\|\varphi_*\|_{L^\infty}>\|\varphi_0\|_{L^\infty},
$$
which contradicts \eqref{contribution of added term} as claimed.

Now for any constant $\kappa$ satisfying
\begin{equation}\label{kappa bound blow}
\kappa\geq c_1+\sup\bigg\{\|T(t,\varphi)\|_{L^\infty} ~\bigg|~\|\varphi\|_{L^\infty}\leq 2\max\big\{\|\varphi_0\|_{L^\infty},\,2\big\}\bigg\},
\end{equation}
let $F_{\kappa,1},F_{\kappa,2}~:~[0,1]\times C_+^0\longrightarrow \mathbb{R}$ be defined by
$$
\aligned
F_{\kappa,1}(t,\varphi):=&\frac{\|T(t,\varphi)\|_{L^\infty}}{\max\big\{\|\varphi\|_{L^\infty},\,1\big\}}-\kappa,\\
F_{\kappa,2}(t,\varphi):=&\|\varphi\|_{L^\infty}-b_\kappa,
\endaligned
$$ 
where 
$$
b_\kappa:= \kappa+2\max\big\{\|\varphi_0\|_{L^\infty},\,2\big\}+\sup\big\{A(a)~\big|~a_0\le a \le \kappa a_0\big\}.
$$
Here we recall that $A(a)$ is defined in Lemma \ref{lemma - stability 1}. It is clear that $\{F_{\kappa,1},\,F_{\kappa,2}\}$ is a $T-$association. Since $T(1,.)$ has no fixed point, we have by Theorem \ref{theorem generalization} that there exists $(t,\varphi)$ s.t.
\begin{align}
&\varphi=tT(t,\varphi)\label{varphi=tT 2},\\
&F_{\kappa,1}(t,\varphi),\,F_{\kappa,2}(t,\varphi)\leq0,\label{F<0 2}\\
&\big(F_{\kappa,1}F_{\kappa,2}\big)(t,\varphi)=0 \label{F=0 2}.
\end{align}
By \eqref{varphi=tT 2}, \eqref{F=0 2} and the definition of $\kappa$, we must have $t\ne 0$. Setting 
$$
\psi:=T(t,\varphi),
$$
the identity \eqref{varphi=tT 2} is rewritten as
{\small \begin{equation}\label{contradiction-eq 2.1}
\aligned
\frac{4(n-1)}{n-2}\Delta \psi +R\psi=&-\frac{n-1}{n}\tau^{2a_0/t}\psi^{N-1}\\
&+\bigg[|\sigma+LW|^2+\bigg(2\max\big\{\|\varphi_0\|_{L^\infty},\,2\big\}-\|\varphi\|_{L^\infty}\bigg)_+\bigg(\|\sigma+LW_0\|^2_{L^\infty}+k_0^2\bigg)\bigg]\psi^{-N-1},\\
-\frac{1}{2}L^*LW=&\frac{n-1}{n}\psi^Nd\tau^{a_0/t}.
\endaligned
\end{equation}}
Therefore, we have by the definition of $A(a)$ that
\begin{equation}\label{a/t}
\|\psi\|_{L^\infty}\leq A\bigg(\frac{a_0}{t}\bigg).
\end{equation}
Next we will prove that $F_{\kappa,2}(t,\varphi)\ne0$. In fact, if $F_{\kappa,2}(t,\varphi)=0$, i.e., $\|\varphi\|_{L^\infty}=b_\kappa>1$, then  we get by \eqref{varphi=tT 2} 
$$
\frac{a_0}{t}=\bigg(\frac{\|\psi\|_{L^\infty}}{\|\varphi\|_{L^\infty}}\bigg)a_0=\bigg(\frac{\|\psi\|_{L^\infty}}{\max\big\{\|\varphi\|_{L^\infty},\,1\big\}}\bigg)a_0.
$$
Combined with $F_{\kappa,1}(1,\varphi)\le 0$, that is $\frac{\|\psi\|_{L^\infty}}{\max\{\|\varphi\|_{L^\infty},1\}}\le \kappa$, we obtain
$$
\frac{a_0}{t}\leq \kappa a_0.
$$
However, by \eqref{a/t} and the definition of $b_k$, this leads to the contradiction that
 $$
 b_\kappa=\|\varphi\|_{L^\infty}\leq \|\psi\|_{L^\infty}\leq A\bigg(\frac{a_0}{t}\bigg)<b_\kappa.
 $$
 Therefore, $F_{\kappa,2}\ne 0$ as claimed and hence we deduce from \eqref{F=0 2} that
 \begin{equation}\label{t=kappa}
 F_{\kappa,1}(t,\varphi)=\frac{\|\psi\|_{L^\infty}}{\max\big\{\|\varphi\|_{L^\infty},\,1\big\}}-\kappa=0.
 \end{equation}
 In particular,  we have
 $$
 \|\psi\|_{L^\infty}\geq \kappa.
 $$
 If 
 $$
 \|\varphi\|_{L^\infty} \leq 2\max\big\{\|\varphi_0\|_{L^\infty},\,2\big\},
 $$
 it follows from the property \eqref{kappa bound blow} of $\kappa$ that 
 $$
 \|\psi\|_{L^\infty}<\kappa,
 $$
 which is a contradiction. Therefore, we must have  
 \begin{equation}\label{varphi > 2max}
 \|\varphi\|_{L^\infty} > 2\max\big\{\|\varphi_0\|_{L^\infty},\,2\big\}>1,
 \end{equation}
  and hence by \eqref{varphi=tT 2} and \eqref{t=kappa}
 \begin{equation}\label{1/t=kappa}
 \frac{1}{t}=\frac{\|\psi\|_{L^\infty}}{\|\varphi\|_{L^\infty}}=\kappa.
 \end{equation}
 Taking \eqref{varphi > 2max}-\eqref{1/t=kappa} into \eqref{contradiction-eq 2.1}, we obtain
 $$
 \aligned
 \frac{4(n-1)}{n-2}\Delta \psi +R\psi=&-\frac{n-1}{n}\tau^{2\kappa a_0}\psi^{N-1}+|\sigma+LW|^2\psi^{-N-1},\\
 -\frac{1}{2}L^*LW=&\frac{n-1}{n}\psi^Nd\tau^{\kappa a_0}.
 \endaligned
 $$
Since $\|\psi\|_{L^{\infty}}>\kappa>c_1$ and since $\kappa$ is an arbitrary constant satisfying \eqref{kappa bound blow}, the theorem follows.
\end{proof}
As the reader may have noticed, the key aspects of Theorem \ref{theorem-nonuniqueness 1} are smallness of $\tau^a$ and nonexistence of non-zero solutions $W$ to the limit equations
$$
-\frac{1}{2}L^*LW=\sqrt{\frac{n-1}{n}}|LW|\frac{d\tau^a}{\tau^a}.
$$
In this sense, as we will see below, the first assertion in Theorem \ref{theorem.main} can be understood to be another ``version" of the second one. 
\begin{theorem}
Let $g\in W^{2,p}$ with $p > n$ be a Yamabe-positive metric on a smooth compact $n-$manifold. Assume that $g$ has no conformal Killing vector field, $\sigma \in W^{1,p}\setminus \{0\}$ and $\tau\in W^{1,p}$ does not change sign. Assume furthermore that 
$$
\bigg|L\bigg(\frac{d\tau}{\tau}\bigg)\bigg|\le c\bigg|\frac{d\tau}{\tau}\bigg|^2
$$
for some constant $c>0$. Given $a>\frac{c}{2}\sqrt{\frac{n}{n-1}}$ the system \eqref{CE} associated with $(g,t\tau^a,\sigma)$  has at least two solutions for all $t$ small enough only depending on $(g,\tau,\,\sigma,a)$.
\end{theorem}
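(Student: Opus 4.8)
The plan is to transpose the argument of Theorem~\ref{theorem-nonuniqueness 1} essentially verbatim, with the homotopy parameter taking over the role played there by the exponent $a_{0}/t$. Fix $a>\frac{c}{2}\sqrt{\frac{n}{n-1}}$. Since $\|d(t\tau^{a})\|_{L^{p}}=t\,\|d\tau^{a}\|_{L^{p}}\to 0$ as $t\to 0^{+}$, Theorem~\ref{theorem.farCMC} provides $t_{1}=t_{1}(g,\tau,\sigma,a)>0$ and $c_{1}=c_{1}(g,\sigma)>0$ such that for every $t\le t_{1}$ the system \eqref{CE} with data $(g,t\tau^{a},\sigma)$ has a solution $(\widetilde\varphi_{t},\widetilde W_{t})$ with $\|\widetilde\varphi_{t}\|_{L^{\infty}}\le c_{1}$. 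Hence it is enough to produce, for all sufficiently small $t>0$, a solution of \eqref{CE} for $(g,t\tau^{a},\sigma)$ with $\|\varphi\|_{L^{\infty}}>c_{1}$. Note that, unlike in Theorem~\ref{theorem-nonuniqueness 1}, no hypothesis $|\tau|<1$ will be required here, since at the degenerate end of the homotopy below the mean curvature tends to $0$ on its own.

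The crucial step is the analogue of Lemma~\ref{lemma - stability 1}. For $t,k\ge 0$ let $\mathcal{B}_{t}(k)$ be the set of $(\varphi,W)$ solving
$$
\aligned
\frac{4(n-1)}{n-2}\Delta\varphi+R\varphi&=-\frac{n-1}{n}t^{2}\tau^{2a}\varphi^{N-1}+\big(|\sigma+LW|^{2}+k^{2}\big)\varphi^{-N-1},
\\
-\frac{1}{2}L^{*}LW&=\frac{n-1}{n}\varphi^{N}\,d(t\tau^{a}),
\endaligned
$$
and put $B(t):=\sup_{k\ge 0}\sup_{(\varphi,W)\in\mathcal{B}_{t}(k)}\|\varphi\|_{L^{\infty}}$. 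I would show $B(t)<+\infty$ for every $t>0$, and moreover that $B$ is bounded on every compact subinterval of $(0,+\infty)$, arguing by contradiction and the blow-up procedure of \cite[Theorem~1.1]{DahlGicquaudHumbert}, \cite[Theorem~3.3]{Nguyen} used in Lemma~\ref{lemma - stability 1}: take $\|\varphi_{i}\|_{L^{\infty}}\to+\infty$ with $t_{i}\to t_{*}>0$, rescale by $\gamma_{i}=\max\{\|\varphi_{i}\|_{L^{\infty}},k_{i}^{1/N}\}$, and pass to the limit. The decisive observation --- and the reason this result is merely ``another version'' of Theorem~\ref{theorem-nonuniqueness 1} --- is that the factor $t^{2}$ in the Lichnerowicz equation makes the limit of $\widehat\varphi_{i}^{N}$ carry a compensating factor $t_{*}^{-1}$, which cancels the $t_{*}$ appearing in the rescaled vector equation, so that one is left with exactly the $t$-independent limit equation
$$
-\frac{1}{2}L^{*}LW_{\infty}=\sqrt{\frac{n-1}{n}}\,\big(|LW_{\infty}|+k_{\infty}\big)\frac{d\tau^{a}}{\tau^{a}}=a\sqrt{\frac{n-1}{n}}\,\big(|LW_{\infty}|+k_{\infty}\big)\frac{d\tau}{\tau},
$$
with $(W_{\infty},k_{\infty})\ne(0,0)$. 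Pairing with $d\tau/\tau$, integrating, and invoking the inequality on $\tau$ exactly as in \eqref{ineq:3.1} gives $a\sqrt{\frac{n-1}{n}}\le\frac{c}{2}$ unless $|LW_{\infty}|+k_{\infty}\equiv 0$; since $a>\frac{c}{2}\sqrt{\frac{n}{n-1}}$ this forces $(W_{\infty},k_{\infty})=(0,0)$, a contradiction.

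With $B$ under control, I would fix a small $t_{0}\in(0,t_{1}]$ and carry out Steps~1--2 of Theorem~\ref{theorem-nonuniqueness 1} with $\mathcal{B}_{t_{0}}$ in place of $\mathcal{A}_{a_{0}}$. Step~1 --- a maximizing sequence for $B(t_{0})$, $W^{2,p}$-boundedness of the $W_{i}$ from the vector equation together with Sobolev embedding, then Proposition~\ref{proposition L continuous} --- produces a critical element $(\varphi_{0},W_{0},k_{0})\in\mathcal{B}_{t_{0}}(k_{0})$ with $\|\varphi_{0}\|_{L^{\infty}}=B(t_{0})$. Step~2: define $T:[0,1]\times C^{0}_{+}\to C^{0}_{+}$, the homotopy scaled so that a fixed point at parameter $s$ corresponds to mean curvature $t_{0}s\,\tau^{a}$. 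For $s\in(0,1]$, let $W_{s,\varphi}$ solve $-\frac{1}{2}L^{*}LW_{s,\varphi}=\frac{n-1}{n}s^{-N}\varphi^{N}d(t_{0}s\,\tau^{a})$ and let $T(s,\varphi):=\psi_{s,\varphi}$ be the unique positive solution of the Lichnerowicz equation with mean curvature $t_{0}s\,\tau^{a}$ whose right-hand coefficient is $|\sigma+LW_{s,\varphi}|^{2}$ augmented by the constant $\big(2\max\{\|\varphi_{0}\|_{L^{\infty}},2\}-\|\varphi\|_{L^{\infty}}\big)_{+}\big(\|\sigma+LW_{0}\|_{L^{\infty}}^{2}+k_{0}^{2}\big)$; let $T(0,\varphi)$ solve the same equation with vanishing mean curvature and coefficient $|\sigma|^{2}$ augmented by the same constant (well-defined by Case~1 of Theorem~\ref{theorem. maxwell1}). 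Since $t_{0}s\,\tau^{a}\to 0$ as $s\to 0$, the reasoning of \cite{DahlGicquaudHumbert,MaxwellNonCMC} and of Step~2 shows $T$ is continuous and compact on $[0,1]\times C^{0}_{+}$, with no smallness of $\|\tau\|_{L^{\infty}}$ needed. If $\varphi_{*}$ were a fixed point of $T(1,\cdot)$, then, with $W_{*}:=W_{1,\varphi_{*}}$, the pair $(\varphi_{*},W_{*})$ would solve the $(t_{0},k)$-conformal equations with added constant $\big(2\max\{\|\varphi_{0}\|_{L^{\infty}},2\}-\|\varphi_{*}\|_{L^{\infty}}\big)_{+}\big(\|\sigma+LW_{0}\|_{L^{\infty}}^{2}+k_{0}^{2}\big)$, which by $\|\varphi_{*}\|_{L^{\infty}}\le B(t_{0})=\|\varphi_{0}\|_{L^{\infty}}$ is at least $\frac{2}{\text{Vol}_{g}(M)}\int_{M}|\sigma|^{2}\,d\nu>0$; Lemma~\ref{lemma. maxpriw} then forces $\|\varphi_{*}\|_{L^{\infty}}>\|\varphi_{0}\|_{L^{\infty}}$, a contradiction, so $T(1,\cdot)$ has no fixed point.

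Finally, for $\kappa$ large enough to satisfy the analogue of \eqref{kappa bound blow}, set $F_{\kappa,1}(s,\varphi):=\|T(s,\varphi)\|_{L^{\infty}}/\max\{\|\varphi\|_{L^{\infty}},1\}-\kappa$ and $F_{\kappa,2}(s,\varphi):=\|\varphi\|_{L^{\infty}}-b_{\kappa}$, where $b_{\kappa}:=\kappa+2\max\{\|\varphi_{0}\|_{L^{\infty}},2\}+\sup\{B(r):t_{0}/\kappa\le r\le t_{0}\}$, the last term being finite because $B$ is bounded on the compact set $[t_{0}/\kappa,t_{0}]$. Then $\{F_{\kappa,1},F_{\kappa,2}\}$ is a $T$-association, and since $T(1,\cdot)$ has no fixed point, Theorem~\ref{theorem generalization} yields $(s,\varphi)$ with $\varphi=sT(s,\varphi)$, $F_{\kappa,1},F_{\kappa,2}\le 0$ and $F_{\kappa,1}F_{\kappa,2}=0$. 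Writing $\psi:=T(s,\varphi)$ and $W:=W_{s,\varphi}$, the identity $\varphi=s\psi$ shows $(\psi,W)$ solves the $(t_{0}s,k)$-conformal equations, hence $\|\psi\|_{L^{\infty}}\le B(t_{0}s)$; then the bookkeeping of the proof of Theorem~\ref{theorem-nonuniqueness 1} --- using $\|\varphi\|_{L^{\infty}}=s\|\psi\|_{L^{\infty}}\le\|\psi\|_{L^{\infty}}$, $1/s=\|\psi\|_{L^{\infty}}/\|\varphi\|_{L^{\infty}}$, and the defining properties of $\kappa$ and $b_{\kappa}$ --- rules out $s=0$ and $F_{\kappa,2}=0$, forces $F_{\kappa,1}=0$ and $\|\varphi\|_{L^{\infty}}>2\max\{\|\varphi_{0}\|_{L^{\infty}},2\}$ (so the added constant vanishes), and gives $1/s=\kappa$. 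Thus $(\psi,W)$ solves \eqref{CE} for $(g,(t_{0}/\kappa)\tau^{a},\sigma)$ with $\|\psi\|_{L^{\infty}}>\kappa>c_{1}$; letting $\kappa$ range over large values produces such a solution for every $t=t_{0}/\kappa$ below a threshold depending only on $(g,\tau,\sigma,a)$, which together with $(\widetilde\varphi_{t},\widetilde W_{t})$ gives two distinct solutions. The hard part is the stability estimate above: one must verify that the $t^{2}$ in the Lichnerowicz equation generates precisely the factor cancelling the $t$ in the vector equation, so that the limiting integral inequality involves $a$ rather than $a$ times the homotopy parameter --- this is exactly what lets the hypothesis on $a$ suffice for all small $t$. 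A minor additional point is the boundedness of $B$ on compact subintervals of $(0,+\infty)$, needed to make $b_{\kappa}$ finite, which follows from the same blow-up applied to a sequence of parameters converging to an interior point.
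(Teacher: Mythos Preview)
Your overall strategy matches the paper's, but there is a genuine gap in Step~2: your operator $T$ is \emph{not} continuous at $s=0$. You define $W_{s,\varphi}$ by
\[
-\tfrac{1}{2}L^{*}LW_{s,\varphi}=\tfrac{n-1}{n}\,s^{-N}\varphi^{N}\,d(t_{0}s\,\tau^{a})=\tfrac{n-1}{n}\,t_{0}\,s^{1-N}\varphi^{N}\,d\tau^{a},
\]
and since $N>1$ the right-hand side blows up as $s\to 0^{+}$ for any fixed $\varphi$; hence $\|LW_{s,\varphi}\|_{L^{\infty}}\to\infty$ and $\psi_{s,\varphi}$ cannot converge to your $\psi_{0,\varphi}$ (which uses coefficient $|\sigma|^{2}$). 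Your justification ``since $t_{0}s\,\tau^{a}\to 0$'' addresses only the $\tau^{2}$-term in the Lichnerowicz equation, not the divergence in the vector equation. In Theorem~\ref{theorem-nonuniqueness 1} this same $t^{-N}$ factor is harmless only because $d\tau^{a_{0}/t}$ decays faster than any power of $t$ when $|\tau|<1$; that mechanism is absent here.

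The paper's fix is a different parametrization: the vector equation is taken to be \emph{parameter-independent},
\[
-\tfrac{1}{2}L^{*}LW_{\varphi}=\tfrac{n-1}{n}\,t_{0}\,\varphi^{N}\,d\tau^{a},
\]
and the Lichnerowicz equation carries the factor $t^{2N}t_{0}^{2}\tau^{2a}$ instead of $(t_{0}t)^{2}\tau^{2a}$. Then continuity and compactness at $t=0$ are automatic (Proposition~\ref{proposition L continuous} applies directly). The fixed-point relation $\varphi=t\psi$ turns the vector equation into $-\tfrac{1}{2}L^{*}LW=\tfrac{n-1}{n}\,t^{N}t_{0}\,\psi^{N}\,d\tau^{a}$, so the effective mean curvature is $t^{N}t_{0}\,\tau^{a}$, and the Lichnerowicz coefficient $t^{2N}t_{0}^{2}=(t^{N}t_{0})^{2}$ matches. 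Consequently $b_{\kappa}$ involves $\sup\{A(r):\kappa^{-N}t_{0}\le r\le t_{0}\}$ and the final solution has mean curvature $\kappa^{-N}t_{0}\,\tau^{a}$, which still sweeps out all small $t$. Apart from this reparametrization your Steps~1 and~3 are correct and coincide with the paper's argument.
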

\begin{proof}
	We have by Theorem \ref{theorem.farCMC} that there exists $t_0>0$ small enough only depending on $(g,\,\tau,\,\sigma,\,a)$ s.t. for all $t\le t_0$ the system \eqref{CE} associated with $(g,t\tau^a,\sigma)$ has a solution $(\widetilde{\varphi}_t,\,\widetilde{W}_t)$ satisfying
	$$
	\|\widetilde{\varphi}_t\|_{L^\infty}\le c_1
	$$ 
for some $c_1=c_1(g,\sigma)>0$.	Therefore, to prove the theorem, it suffices to show that \eqref{CE} associated with $(g,\,tt_0\tau^a,\sigma)$ admits a solution $(\varphi_t,\,W_t)$ satisfying  $\|\varphi_t\|_{L^\infty}>c_1$ for all sufficiently small $t>0$.
	
In fact, for any $t,k>0$ let us denote by $\mathcal{A}_t(k)$ the set of all $(\varphi,\,W)$ satisfying the following $(t,k)-$ conformal equations
	\begin{equation}\label{t-CE}
	\aligned
	\frac{4(n-1)}{n-2}\Delta \varphi+R\varphi=&-\frac{n-1}{n}t^2\tau^{2a}\varphi^{N-1}+\left(|\sigma+LW|^{2}+k^{2}\right)\varphi^{-N-1},\\
	-\frac{1}{2}L^{*}LW=&\frac{n-1}{n}t\varphi^{N}d\tau^a.
	\endaligned
	\end{equation}
	Since $a>\frac{c}{2}\sqrt{\frac{n}{n-1}}$, in analogy with Lemma \ref{lemma - stability 1} we have that for any $t>0$
	\begin{equation}\label{def. A(t)}
	A(t):=\sup_{k>0}\bigg\{\sup_{(\varphi,W)\in \mathcal{A}_t(k)}\|\varphi\|_{L^\infty}\bigg\}<+\infty.
	\end{equation}
	Next similarly to Step 1 in the proof of Theorem  \ref{theorem-nonuniqueness 1}, we may take $(\varphi_{0},\,W_{0},\,k_0)$ s.t. $(\varphi_0,\,W_0)$ satisfies the $(t_0,\,k_0)-$conformal equations \eqref{t-CE} and
	$$
	\|\varphi_{0}\|_{L^\infty}=\mathcal{A}(t_0).
	$$
	Now we define a continuous and compact operator $T~:~[0,1]\times C_+^0\rightarrow C_+^0$ as follows. For each $\varphi\in C_+^0$, 
	 there exists a unique $W_{\varphi}\in W^{2,p}$ s.t.
	$$
	-\frac{1}{2}L^*LW_{\varphi}=\frac{n-1}{n}t_0\varphi^Nd\tau^{a},
	$$ 
	and there is a unique $\psi_{t,\varphi}\in W_{+}^{2,p}$ s.t.
	$$
	\aligned
	&\frac{4(n-1)}{n-2}\Delta \psi_{t,\varphi} +R\psi_{t,\varphi}+\frac{n-1}{n}t^{2N}t_0^2\tau^{2a}\psi_{t,\varphi}^{N-1}\\
	=&\bigg[|\sigma+LW_{\varphi}|^2+\bigg(2\max\big\{\|\varphi_{0}\|_{L^\infty},\,2\big\}-\|\varphi\|_{L^\infty}\bigg)_+\bigg(\|\sigma+LW_0\|^2_{L^\infty}+k_0^2\bigg)\bigg]\psi_{t,\varphi}^{-N-1}.
	\endaligned
	$$
We define
$$
T(t,\varphi):=\psi_{t,\varphi}.
$$
In view of Proposition \ref{proposition L continuous}, it follows by \cite{DahlGicquaudHumbert, MaxwellNonCMC}  that $T$ is a continuous and compact operator. Moreover, analysis similar to that in the proof of Theorem \ref{theorem-nonuniqueness 1} shows that $T(1,.)$ has no fixed point.

Next for an arbitrary $\kappa$ satisfying
\begin{equation}\label{2.kappa bound blow}
\kappa\geq c_1+\sup\bigg\{\|T(t,\varphi)\|_{L^\infty} ~\bigg|~\|\varphi\|_{L^\infty}\leq 2\max\big\{\|\varphi_0\|_{L^\infty},\,2\big\}\bigg\},
\end{equation}
let $F_{\kappa,1},F_{\kappa,2}~:~[0,1]\times C_+^0\longrightarrow \mathbb{R}$ be defined by
$$
\aligned
F_{\kappa,1}(t,\varphi):=&\frac{\|T(t,\varphi)\|_{L^\infty}}{\max\big\{\|\varphi\|_{L^\infty},\,1\big\}}-\kappa,\\
F_{\kappa,2}(t,\varphi):=&\|\varphi\|_{L^\infty}-b_\kappa,
\endaligned
$$ 
where 
$$
b_\kappa:= \kappa+2\max\big\{\|\varphi_0\|_{L^\infty},\,2\big\}+\sup\big\{A(t)~\big|~\kappa^{-N}t_0\le t \le t_0\big\}.
$$
We may easily check that $\{F_{\kappa,\,1},\,F_{\kappa,\,2}\}$ is a $T-$association. Since $T(1,.)$ has no fixed point, we have by Theorem \ref{theorem generalization} that there exists $(t,\varphi)$ s.t.
\begin{align}
&\varphi=tT(t,\varphi)\label{2. varphi=tT 2},\\
&F_{\kappa,1}(t,\varphi),\,F_{\kappa,2}(t,\varphi)\leq0,\label{2. F<0 2}\\
&\big(F_{\kappa,1}F_{\kappa,2}\big)(t,\varphi)=0 \label{2. F=0 2}.
\end{align}
By \eqref{2. varphi=tT 2}, \eqref{2. F=0 2} and the definition of $\kappa$, we have $t\ne 0$. Setting 
$$
\psi:=T(t,\varphi),
$$
the identity \eqref{2. varphi=tT 2} is rewritten as
{\small \begin{equation}\label{2. contradiction-eq 2.1}
	\aligned
	\frac{4(n-1)}{n-2}\Delta \psi +R\psi=&-\frac{n-1}{n}\tau^{2a}t^{2N}t_0^2\psi^{N-1}\\
	&+\bigg[|\sigma+LW|^2+\bigg(2\max\big\{\|\varphi_0\|_{L^\infty},\,2\big\}-\|\varphi\|_{L^\infty}\bigg)_+\bigg(\|\sigma+LW_0\|^2_{L^\infty}+k_0^2\bigg)\bigg]\psi^{-N-1},\\
	-\frac{1}{2}L^*LW=&\frac{n-1}{n}t^Nt_0\psi^Nd\tau^{a}.
	\endaligned
	\end{equation}}
It follows from the definition of $A(t)$ that
\begin{equation}\label{2. a/t}
\|\psi\|_{L^\infty}\leq A\bigg(t^Nt_0\bigg).
\end{equation}
We will show that $F_{\kappa,2}(t,\varphi)\ne0$. In fact, if $F_{\kappa,2}(t,\varphi)=0$, i.e., $\|\varphi\|_{L^\infty}=b_\kappa>1$, we get by \eqref{2. varphi=tT 2} that
$$
t^{N}t_0=\bigg(\frac{\|\varphi\|_{L^\infty}}{\|\psi\|_{L^\infty}}\bigg)^Nt_0=\bigg(\frac{\max\big\{\|\varphi\|_{L^\infty},\,1\big\}}{\|\psi\|_{L^\infty}}\bigg)^Nt_0.
$$
Combined with $F_{\kappa,1}(t,\varphi)\le 0$, we obtain
$$
t^Nt_0\geq \kappa^{-N} t_0.
$$
However, by \eqref{2. a/t} and the definition of $b_k$, this leads to a contradiction that
$$
b_\kappa=\|\varphi\|_{L^\infty}\leq \|\psi\|_{L^\infty}\leq A\big(t^Nt_0\big)<b_\kappa.
$$
Therefore, $F_{\kappa,2}\ne 0$ as claimed and hence by \eqref{2. F=0 2} we have  
\begin{equation}\label{2. t=kappa}
F_{\kappa,1}(t,\varphi)=\frac{\|\psi\|_{L^\infty}}{\max\big\{\|\varphi\|_{L^\infty},\,1\big\}}-\kappa=0.
\end{equation}
Now analysis similar to that in the proof of Theorem \ref{theorem-nonuniqueness 1} shows that  
\begin{equation}\label{2. varphi > 2max}
\|\varphi\|_{L^\infty} > 2\max\big\{\|\varphi_0\|_{L^\infty},\,2\big\}>1.
\end{equation}
It follows by \eqref{2. varphi=tT 2} and \eqref{2. t=kappa} that
\begin{equation}\label{2. 1/t=kappa}
\frac{1}{t}=\frac{\|\psi\|_{L^\infty}}{\|\varphi\|_{L^\infty}}=\kappa.
\end{equation}
Taking \eqref{2. varphi > 2max}-\eqref{2. 1/t=kappa} into \eqref{2. contradiction-eq 2.1}, we obtain
$$
\aligned
\frac{4(n-1)}{n-2}\Delta \psi +R\psi=&-\frac{n-1}{n}\kappa^{-2N}t_0^2\tau^{2 a}\psi^{N-1}+|\sigma+LW|^2\psi^{-N-1},\\
-\frac{1}{2}L^*LW=&\frac{n-1}{n}\kappa^{-N}t_0\psi^Nd\tau^{a}.
\endaligned
$$
Since $\|\psi\|_{L^{\infty}}>\kappa>c_1$ and since $\kappa$ is an arbitrary constant satisfying \eqref{2.kappa bound blow}, the theorem follows.
\end{proof}

\textbf{Acknowledgments} The author wishes to express his gratitude to Romain Gicquaud and David Maxwell for their advice and helpful discussions. This research is funded by Vietnam National Foundation for Science and Technology Development (NAFOSTED) under grant number 101.02-2016.22. 
%%%%%%%%%%%%%%%%%%%%%%%%%%%%%%%%%%%%%%%%%%%%%%%%%%%%%%%%%%%%%%%%%%%%%%%%%%%%%%%%%%%%%%%%%%%%%%%%%%%%%%%%%%%%%%%%%%%%%%%%%%%%%%%%%%%%%%%%%%%%%%%%%%%%%%%%%%%%%%%%%%%%%%%%%%%%%%%%%%%%%%%%%%%%%

\addcontentsline{toc}{section}{\large References}

\end{document}